\newcommand{\CC}{\mathbb{C}}
\newcommand{\RR}{\mathbb{R}}
\newcommand{\NN}{\mathbb{N}}
\newcommand{\fa}{\mathfrak{a}}
\newcommand{\fb}{\mathfrak{b}} 
\newcommand{\fc}{\mathfrak{c}} 
\newcommand{\fg}{\mathfrak{g}}
\newcommand{\cA}{\mathcal{A}}
\newcommand{\cB}{\mathcal{B}}
\newcommand{\cC}{\mathcal{C}}
\newcommand{\cI}{\mathcal{I}}
\newcommand{\cJ}{\mathcal{J}}
\newcommand{\cF}{\mathcal{F}}
\newcommand{\cD}{\mathcal{D}}
\newcommand{\cK}{\mathcal{K}}
\newcommand{\cL}{\mathcal{L}}
\newcommand{\cE}{\mathcal{E}}
\newcommand{\cR}{\mathcal{R}}
\newcommand{\assign}{:=}
\newcommand{\cdummy}{\cdot}
\newcommand{\nobracket}{}
\newcommand{\nocomma}{}
\newcommand{\noplus}{}
\newcommand{\nosymbol}{}
\newcommand{\rank}{\rm{rank}}
\newcommand{\Lin}{\rm{Lin}}
\newcommand{\tmop}[1]{\ensuremath{\operatorname{#1}}}
\newcommand{\tmstrong}[1]{\textbf{#1}}
\newcommand{\tmtextit}[1]{{\itshape{#1}}}
\newenvironment{proof}{\noindent\textbf{Proof.\ }}{\hspace*{\fill}$\Box$\medskip}
\definecolor{grey}{rgb}{0.75,0.75,0.75}
\definecolor{orange}{rgb}{1.0,0.5,0.5}
\definecolor{brown}{rgb}{0.5,0.25,0.0}
\definecolor{pink}{rgb}{1.0,0.5,0.5}
\newtheorem{lemma}{Lemma}[section]
\newtheorem{definition}[lemma]{Definition} 
\newtheorem{proposition}[lemma]{Proposition}
{\theorembodyfont{\rmfamily}\newtheorem{remark}[lemma]{Remark}}
\newtheorem{theorem}[lemma]{Theorem}
\begin{document}

\title{Flat extensions in $\ast$-algebras}
\author{B. Mourrain \& K. Schm{\"u}dgen}
\maketitle
\abstract{ The main result of the paper is a flat extension theorem
  for positive linear functionals on $*$-algebras. The theorem is
  applied to truncated moment problems on cylinder sets, on matrices
  of polynomials and on enveloping algebras of Lie algebras.}

\medskip
\noindent{}\textbf{Keywords:}  $\ast$-algebras; flat extension;
hermitian linear form; GNS construction; truncated moment problem; enveloping algebra;

\section{Introduction}

Given real numbers $s_\alpha$, where $\alpha \in \NN_0^d$, $|\alpha|\leq 2n$, the (real) truncated moment problem \cite{CF91,CF96} asks: When does there exist a positive Borel  measure $\mu$ on $\RR^d$ such that $s_\alpha =\int x^\alpha d\mu$ for all $\alpha \in \NN_0^d, |\alpha|\leq 2n$?
A fundamental  result on  this problem is the flat extension theorem due to R. Curto and L. Fialkow \cite{CF96}, see e.g. \cite{L} for a  nice exposition and \cite{LM} for another proof.  
The flat extension theorem extends the corresponding linear functional
on the polynomials of degree at most $2n$ to a flat positive linear
functional on the whole polynomial algebra $\CC[x_1,\dots,x_d]$. Then the extended functional  is represented 
by a positive measure. For this  it is crucial that the extension is also flat, because then the  GNS representation acts on a finite dimensional space and the existence of the measure $\mu$ can be derived (for instance) from the finite dimensional spectral theorem.
The aim of this paper is to extend the flat extension theorem  to complex unital $*$-algebras. 

Suppose that $\cA$ is a unital complex $*$-algebra. Let $\pi$ be a $\ast$-representation of $\cA$ on a unitary space $(V,
\langle \cdot, \cdot \rangle)$.  For any $v \in V$, there is a  positive linear functional $L_v$
on $A$ defined by 
$$L_v (a) = \langle \pi (a) v, v \rangle,\quad a \in
\cA.$$
Functionals of  this form are called {\it vector functionals} in the representation $\pi$. 

Let $L$ be a linear functional on a $*$-invariant linear subspace $\cB$ of $\cA$. A natural question is when $L$ has an extension to a positive linear functional $\tilde{L}$ on the whole $*$-algebra $\cA$. By the GNS construction (see e.g. \cite[Section 8.6]{Sch1990}) this holds if and only if $L$ is the restriction of a vector functional $L_v$ in some representation $\pi$. Furthermore,  the extension $\tilde{L}$ should be "nice" in order to represent it in some "well-behaved" representation $\pi$. The latter  is the counter-part for requiring that the extended functional can be given by some positive measure.
In general, it seems that these problem are  difficult and no simple answers can be expected. 

The main result of this paper is  a  general flat extension theorem
for hermitean linear functionals. Roughly speaking and omitting
technical details and assumptions, it says that if $L$ is a hermitean
linear functional  on a subspace which is flat with respect to some
appropriate smaller subspace, then $L$ has a unique flat extension
$\tilde{L}$ to $\cA$. Furthermore, if $L$ is positive, so is
$\tilde{L}$.  

This theorem contains the Curto-Fialkow theorem as a special case when $\cA$ is the polynomial algebra. But it applies also to certain noncommutative $*$-algebras and it allows one to treat  noncommutative truncated moment problems. We mention  three applications, the first concerns truncated moment problems on cylinder sets, the second is about truncated moment problems for matrices over polynomial algebras,    and the third deals with enveloping algebras of finite dimensional Lie algebras. 

Flat extensions of positive functionals on path $*$-algebras have been  considered in \cite{P}.

\section{Basic definitions and preliminaries}

First we prove a proposition which contains a reformulation of flatness for hermitean matrices. It  gives the justifaction for the flatness Definition \ref{defflatfunc} below.

Let $A \in M_n ( \CC)$, $B \in M_{n, k} ( \CC)$, $C \in M_k (
\CC)$, where $A = A^{\ast}$ and $C = C^{\ast}$, and consider the
hermitean block matrix $X$ acting on the vector space $\CC^{n + k}$:
\[ X = \left( \begin{array}{ll}
     A & B\\
     B^{\ast} & C
   \end{array} \right) . \]
Then the matrix $X$ is called a \tmtextit{flat extension} of the matrix $A$
if $\rank A =\rank X$.

\begin{proposition}\label{matrixcase}
  The following statements are equivalent:
  \begin{itemize}
  \item[(i)] $X$ is a flat extension of $A$.
  \item[(ii)] $\CC^{n + k} = ( \CC^n, 0) + \ker X$.
  \end{itemize}
\end{proposition}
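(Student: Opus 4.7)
The plan is to route both implications through an intermediate matrix $W \in M_{n,k}(\CC)$ satisfying $B = AW$ and $C = W^*AW$. This pair of equations amounts to the factorization
\[ X = \begin{pmatrix} I_n \\ W^* \end{pmatrix} A \begin{pmatrix} I_n & W \end{pmatrix}, \]
in which the outer matrices have full column and full row rank $n$ respectively, so $\rank X = \rank A$. Moreover, a direct check shows $(-Wv, v) \in \ker X$ for every $v \in \CC^k$, so the identity $(u, v) = (u + Wv, 0) + (-Wv, v)$ realizes the decomposition demanded by (ii). Thus the existence of such a $W$ will be the common bridge between the two statements.

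For (i) $\Rightarrow$ (ii), I would first observe that the submatrix $\begin{pmatrix} A \\ B^* \end{pmatrix}$ has rank sandwiched between $\rank A$ and $\rank X$, so under (i) all three coincide. This forces the remaining block column $\begin{pmatrix} B \\ C \end{pmatrix}$ to lie in the column span of $\begin{pmatrix} A \\ B^* \end{pmatrix}$, producing $W$ with $B = AW$ and $C = B^*W$. Taking adjoints of $B = AW$ and using $A = A^*$ converts this to $C = W^*AW$, and the explicit kernel decomposition recorded above then yields (ii).

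For (ii) $\Rightarrow$ (i), for each standard basis vector $e_j$ of $\CC^k$ I would pick $x_j \in \CC^n$ with $(x_j, e_j) \in \ker X$ and set $W := -[\, x_1 \mid \cdots \mid x_k\,]$. The two block rows of the equations $X(x_j, e_j) = 0$ read exactly $B = AW$ and $C = B^*W$, and hermiticity of $A$ once again upgrades the latter to $C = W^*AW$. The factorization from the first paragraph then gives (i) directly.

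The decisive step in both directions is extracting the matrix $W$; the remaining verifications are routine block-matrix arithmetic in which the only role of hermiticity is to pass between $B = AW$ and $B^* = W^*A$. I expect the main obstacle to be simply recognizing this intermediate $W$-characterization as the correct bridge between the rank condition and the kernel condition, after which each implication is a one-line computation.
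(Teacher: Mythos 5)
Your proof is correct, and while it rests on the same bridge as the paper's --- a matrix $W$ with $B=AW$ and $C=W^{\ast}AW$ --- both implications are implemented differently. For (i)$\Rightarrow$(ii) the paper simply cites Shmuljan's theorem for the existence of $W$; you instead derive it from scratch by noting that under (i) the rank of the first block column $\bigl(\begin{smallmatrix} A\\ B^{\ast}\end{smallmatrix}\bigr)$ is squeezed between $\rank A$ and $\rank X$, so the remaining columns lie in its span. That makes the argument self-contained, which is a genuine gain. For (ii)$\Rightarrow$(i) the paper chooses a complement of $\ker X$ inside $(\CC^{n},0)$, conjugates $X$ by the block-triangular change-of-basis matrix $\bigl(\begin{smallmatrix} I & -W\\ 0 & I\end{smallmatrix}\bigr)$, and reads off $B=AW$, $C=W^{\ast}AW$ from the resulting congruence; you avoid any basis manipulation by extracting, for each standard basis vector $e_{j}$ of $\CC^{k}$, a kernel vector of the form $(x_{j},e_{j})$ (which condition (ii) supplies directly), assembling $W$ from the $x_{j}$, and then computing $\rank X=\rank A$ from the factorization $X=\bigl(\begin{smallmatrix} I\\ W^{\ast}\end{smallmatrix}\bigr)A\,( I \;\, W)$. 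Your route is arguably cleaner here: it sidesteps the paper's slightly delicate reduction to the case where the chosen complement is all of $(\CC^{n},0)$, since you never need a direct sum, only the spanning property. The only point worth stating explicitly in a final write-up is the existence of the $x_{j}$ (decompose $(0,e_{j})$ according to (ii)) and the standard fact that left multiplication by a full-column-rank matrix and right multiplication by a full-row-rank matrix preserve rank; both are routine.
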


\begin{proof}
  (i)$\to$(ii): Since $X$ is a flat extension of $A$, a classical result of
  Shmuljan {\cite{Sml}} states that there exist a matrix $W \in M_{n, k} (
  \CC)$ such that $B = AW$ and $C = W^{\ast} AW$. Let $x \in
  \CC^n$ and $y \in \CC^k$. Then we compute $X (- Wy, y) = 0$,
  so that $(x, y) = (x + Wy, 0) + (- Wy, y) \in \CC^{n + k} + \ker X$
  which proves (ii).
  
 (ii)$\to$(i): 
By reordering the canonical basis of $\CC^{n}$, we may assume that the
first $n'$ canonical basis elements of $\CC^{n}$ are linearly
independent of $\ker X$ and that, together with a basis of $\ker X$, they
form a basis of $\CC^{n+k}$. 
This implies that $\CC^{n+k}= (\CC^{n'},0)\oplus \ker X$ and we can take
$n'=n$ in order to prove that (ii) implies (i). 

The matrix of change of bases between
the canonical basis of $\CC^{n+k}$ and this basis is of the form
$$ 
P= \left (
\begin{array}{cc}
I & -W \\
0 & I
\end{array}
\right)
$$
and the matrix of the symmetric operator in this basis is 
\begin{eqnarray*}
\left (
\begin{array}{cc}
A & 0 \\
0 & 0
\end{array}
\right)
&=& 
\left (
\begin{array}{cc}
I & 0\\
-W^{\ast} & I
\end{array}
\right)
 \left( \begin{array}{cc}
     A & B\\
     B^{\ast} & C
   \end{array} \right)
\left (
\begin{array}{cc}
I & -W \\
0 & I
\end{array}
\right)\\
&=&
\left (
\begin{array}{cc}
A & B-A\, W \\
B^{\ast}-W^{\ast}\,A & C-W^{\ast}B -B^{\ast}W+ W^{\ast} A \, W
\end{array}
\right)
\end{eqnarray*}
This implies that $B= A\, W$, $B^{\ast}= W^{\ast} A$ and $C= W^{\ast}A\,
W$. Thus $X$ is a flat extension of $A$.
%
% For let
%   $x \in \CC^n$ and $y \in \ker X$. Then $X (x + y) = Xx = (Ax,
%   B^{\ast} x)$. Since $X \succcurlyeq 0$, we conclude that $Ax = 0$ implies
%   $B^{\ast} x = 0$ and hence $X (x + y) = 0$. Therefore, since $\CC^{n
%   + k} = \CC^n + \ker X$ by (ii), there is a well-defined linear map $T
%   : \mathrm{ran} A : \to \mathrm{ran} X$ given by $T (Ax) = X (x + y)$. This
%   implies that $\mathrm{rank} \hspace{0.25em} A = \dim \mathrm{ran}
%   \hspace{0.25em} A \leq \dim \mathrm{ran} \hspace{0.25em} X = \mathrm{rank}
%   \hspace{0.25em} X$. Since obviously $\mathrm{rank} A \leq \mathrm{rank}
%   \hspace{0.25em} X$, we get $\mathrm{rank} \hspace{0.25em} A = \mathrm{rank}
%   \hspace{0.25em} X$, that is, $X$ is a flat extension of $A$.
\end{proof}

Throughout this paper we assume that $\mathbb{\cA}$ is a complex
unital $\ast$-algebra. The involution of $\cA$ is denoted by $\ast$
and the unit element by $1$.

Let $\cC$ be a $\ast$-invariant linear subspace of
$\cA$, which contains the unit element $1$ of $\cA$. A linear
functional $L$ on $\cC$ is called \tmtextit{hermitian} if $L
(b^{\ast}) = \overline{L (b)}$ for all $b \in \cC$. Set
\[ \cC^2 \assign {\Lin} \, \{ ab ; a, b \in \cC \}. \]
Now suppose that $L$ is a hermitian linear functional on $\cC^2$. Then
\[ \langle a, b \rangle_L \assign L (b^{\ast} a), a, b \in \cC, \]
defines a hermitian sesquilinear form $\langle \cdummy, \cdummy
\rangle_L$ acting on the vector space $\cC$.
We denote by $K_L ( \cC)$  the vector space
\[ K_L ( \cC) \assign \{a \in \cC : \langle a, b \rangle_L = 0
   \nocomma, \forall b \in \cC \} . \]
We verify that $K_L ( \cC)$ is the kernel of the Hankel operator 
$H_L : \cC  \rightarrow  \cC^{\star}$ defined by
\begin{eqnarray*}
  H_L :  a \mapsto  a \star L,  \quad 
  {\rm where}\quad a \star L : b \in \cC \mapsto L (b^{\ast} a) \in
  \CC.
\end{eqnarray*}
Further, for $a \in \cC$, we define
\[ \mathcal{D_C} (a) = \{b \in \cC^{2} : ab \in \cC^2 \},
   \hspace{1em} \rho (a) b = ab \hspace{1em} \mathrm{for} \hspace{1em} b \in
   \cD_{\cC} (a) . \]
Then $\rho (a)$ is a linear operator with domain $\mathcal{D_C} (a)$ on
$\cC$. It is easily verified that
\[ \begin{array}{lll}
     \rho  (ab) c & = & \rho (a) \rho (b) c \hspace{1em} 
     \mathrm{if} \hspace{1em} abc \in \cC^{2},\\
     \rho (1) b & = & b \hspace{1em} \mathrm{\tmop{for}}
     \hspace{1em} b \in \cC^{2},\\
     L (c^{\ast} ab) & = & \langle \rho_{\nosymbol} (a) b, c \rangle_L =
     \langle b, \rho (a^{\ast}) c \rangle_L \hspace{1em}
     \mathrm{if} \hspace{1em} c^{\ast} ab \in \cC^{2} .
   \end{array} \]
A linear functional $L$ on $\cC^2$ is called
\tmtextit{positive} if $L (a^{\ast} a) \geqslant 0$ for all $a \in
\cC$.

\begin{lemma}
  If a functional $L$ on $\cC^2$ is positive, then $K_L (
  \cC) = \{a \in \cC : L (a^{\ast} a) = 0\}$.
\end{lemma}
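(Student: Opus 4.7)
The plan is to prove the two inclusions separately, and the tool throughout is the fact that, because $L$ is positive, the sesquilinear form $\langle \cdummy, \cdummy\rangle_L$ defined on $\cC$ is a positive semidefinite hermitian form (note that for $a,b \in \cC$ the product $b^\ast a$ lies in $\cC^2$, so $L(b^\ast a)$ makes sense).

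The inclusion $K_L(\cC) \subseteq \{a \in \cC : L(a^\ast a) = 0\}$ is immediate: if $a \in K_L(\cC)$, then the defining condition $\langle a, b\rangle_L = 0$ for every $b \in \cC$, applied with $b = a$, gives $L(a^\ast a) = \langle a, a\rangle_L = 0$.

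For the reverse inclusion, I would invoke the Cauchy--Schwarz inequality for the positive semidefinite hermitian form $\langle \cdummy, \cdummy\rangle_L$ on $\cC$, namely
\[
|\langle a, b\rangle_L|^2 \;\leq\; \langle a, a\rangle_L \, \langle b, b\rangle_L \qquad \text{for all } a,b \in \cC.
\]
The standard proof goes through without modification for semidefinite (not merely definite) forms: expanding $0 \leq \langle a + \lambda b, a + \lambda b\rangle_L$ in $\lambda \in \CC$ and optimizing the choice of $\lambda$ yields the inequality. Granting this, if $L(a^\ast a) = 0$ then $\langle a, a\rangle_L = 0$, and the Cauchy--Schwarz bound forces $\langle a, b\rangle_L = 0$ for every $b \in \cC$, i.e.\ $a \in K_L(\cC)$.

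There is no real obstacle here; the only point to check carefully is that Cauchy--Schwarz survives in the semidefinite setting, which it does by the quadratic argument just sketched. The result is then a one-line consequence in each direction.
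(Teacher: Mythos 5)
Your proof is correct and follows exactly the same route as the paper: the forward inclusion by specializing $b=a$, and the reverse inclusion via the Cauchy--Schwarz inequality for the positive semidefinite form $\langle\cdot,\cdot\rangle_L$. Nothing to add.
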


\begin{proof}
  If $a \in K_L \mathcal{( C)}$, then in particular $L (a^{\ast} a) = \langle
  a, a \rangle_L = 0$. Conversely, suppose that $L (a^{\ast} a) = 0$. Since
  $L$ is positive, we have the Cauchy-Schwarz inequality
  \[ |L (b^{\ast} a) |^2 \leq L (a^{\ast} a) L (b^{\ast} b) \]
  for $b \in \cC$, which implies that $\langle a, b \rangle_L = L
  (b^{\ast} a) = 0$, so that $a \in K_L ( \cC)$.
\end{proof}

The following definition contains the main  concept of this paper. It is  suggested by the equivalence of conditions (i) and (ii) in Proposition \ref{matrixcase}.
\begin{definition}\label{defflatfunc}
  Let $\cB$ and $\cC$ be $\ast$-invariant linear subspaces of
  $\cA$ such that $1 \in \cB$ and $\cB \subseteq
  \cC$. A hermitian linear functional $L$ on $\cC^2$ is called
  a {\rm flat extension} with respect to $\cB$ if
  \[ \cC = \cB + K_L ( \cC) . \]
\end{definition}
Another important notion (see e.e. \cite[Definition 8.1.9]{Sch1990}) is the following.
\begin{definition}\label{defrep}
  Let $V$ be a vector space equipped with a hermitan sesquilinear form
  $\langle \cdummy, \cdummy \rangle$. A {\rm $\ast$-representation} of $\cA$
  on $(V, \langle \cdot, \cdot \rangle)$ is an algebra homomorphism $\rho$ of
  $\cA$ into the algebra of linear operators on $V$ such that
  $\rho (1) v = v$ and $ \langle \rho (a) v, w \rangle = \langle v, \rho
  (a^{\ast}) w \rangle  \hspace{1em} \mathrm{for} \hspace{1em} v, w \in V.$
\end{definition}

Now suppose that $L$ is a hermitian linear functional defined on the whole
$\ast$-algebra $\cA$. Then it is easily checked that $K_L (\cA)$ is a left ideal.
Therefore, we can quotient out the kernel $\cI = K_L ( \cA)$ and get a \tmtextit{non-degenerate} hermitian form on {$\cA/\cI$}, denoted again by $\langle
\cdummy, \cdummy \rangle_L$ by some abuse of notation, and a
$\ast$-representation $\rho$ on $( \cA/\cI, \langle \cdot, \cdot
\rangle_L)$ such that
\[ L (a) = \langle \rho (a) 1,1 \rangle_L \hspace{1em} \mathrm{for} \hspace{1em}
   a \in \cA . \]
If the functional $L$ is positive, then this procedure coincides with the
"ordinary" GNS construction, see e.g. {\cite[Theorem 8.6.2]{Sch1990}}.

For the formulation of our main theorem  the following concept is convenient.

%Suppose that the algebra $\cA$ is generated by $a_i, i \in I$.

\begin{definition} Let $\{a_i, i \in I\}$ be a fixed set of generators of the algebra $\cA$.
  For a vector space $V \subset \cA$, let $V^+ = {\Lin}\, \{ a_i v|
  \nobracket i \in I, v \in V \}$ be the {\em prolongation} of $V$. We shall say that $V$ is {\rm connected
  to $1$} if $1 \in V$ and there exists a increasing sequence of vector spaces
  \[ V_0 = \CC \cdot 1  \subset V_1 \subset \cdots \subset V_l \subset
     \cdots \subset V \]
  such that $\cup_{l \in \mathbb{N}} V_l = V$ and $V_{l + 1} \subset V_l^+$.
  For an element $v \in V$, the {\rm $V$-index} of $v$ is the smallest $l \in
  \mathbb{N}$ such that $v \in V_l$. Let $V^{[ 0]} = V$ and for $l \in
  \mathbb{N}$, we define $V^{[ l + 1]} = ( V^{[ l]})^+ .$ The index of an arbitrary
  element $a \in \cA$ is its $\CC \cdot 1 $-index.
\end{definition}

\section{The extension theorem}

Throughout this section we suppose that $\{a_i ; i \in I\}$ is a fixed set of hermitean
generators of the unital complex $\ast$-algebra $\cA$, such that 
$\forall i \in I$ there exists $j\in I$ such that
$a_i^{\ast} = a_j$.

Let $\cF = \CC \langle x_i, i \in I \rangle$ denote the free
complex unital $\ast$-algebra in variables $x_i$, $i \in I$. Then there is a $\ast$-homomorphism $\sigma
: \cF \to \cA$ such that $\sigma (x_i) = a_i$, $i \in I$. The
kernel $\cI$ of $\sigma$ is a two-sided $\ast$-ideal of $\cF$
and we have and $\cA = \cF / \cI$. For any 
set of generators $S$ of the two-sided ideal $\cI$, 
we denote by $\delta (S)\in \NN\cup\{\infty\}$ the maximal index of the elements of $S$.
Let 
$\delta (\cA)$ be the lowest $\delta (S)$ of all sets $S$ of
generators of $\cI$.

\begin{theorem}\label{thm:main}
Let $\cB$ and ${\cC}$ be $\ast$-invariant linear
  subspaces of $\cA$, connected to $1$ such that $\cB^{[m]} \subseteq
  \cC$ with $2m \geq \delta (\cA)$.
  Let $L$ be a hermitian linear functional on
  $\cC^2$ which is a flat extension with respect to $\cB$.
  Then there exists a unique extension of $L$ to a hermitian linear functional
  $\cL$ on $\cA$ which is a flat extension with respect to $\cC$. 
  
Let us choose a linear subspace  $\cB'$ of $\cB$ such that $1\in \cB'$
and $\cC$ is the direct sum of the vector spaces $\cB'$ and $K_L ( \cC)$. 
The  form  $\langle \cdot , \cdot \rangle_{\cL}$ is non-degenerate on
$\cB'$ and the projection of $\cC$ on  $\cB'$ along $K_L ( \cC)$  extends to a projection 
$\pi_{\cL} : \cA \rightarrow \cB'$. The
$*$-representation $\rho_{\cL}$  of $\cA$ associated with $\cL$ acts on $(\cB', \langle \cdot , \cdot
\rangle_{\cL})$ by \, $\rho_{\cL}(a)b=\pi_{\cL}(ab)$, $a\in \cA, b \in \cB'$.

Further, if $L$ is a positive linear functional on $\cC^2$, then $\cL$
  is a positive linear functional on $\cA$ and  $\langle \cdot , \cdot \rangle_{\cL}$ is  a scalar product on $\cB'$.
\end{theorem}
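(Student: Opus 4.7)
The plan is to construct, in parallel with $\cL$, a linear projection $\pi_{\cL}: \cA \to \cB'$ that extends the given projection $\pi: \cC \to \cB'$ (with kernel $K_L(\cC)$), and then set $\cL(a) := L(\pi_{\cL}(a))$ for all $a \in \cA$. The sesquilinear form on $\cB'$ is inherited from $L$ via $\langle b', c' \rangle_L = L(c'^{\ast} b')$, which is defined because $\cB' \subseteq \cB$ and $\cB$ is $\ast$-invariant, so $(\cB')^{\ast} \cB' \subseteq \cB^2 \subseteq \cC^2$. The $\ast$-representation is then forced to be $\rho_{\cL}(a) b' := \pi_{\cL}(a b')$.

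I would extend $\pi_{\cL}$ from $\cC$ to all of $\cA = \bigcup_{l \geq 0} \cC^{[l]}$ (which exhausts $\cA$ because $\cC \supseteq \CC \cdot 1$ and $\cA$ is generated by the $a_i$'s) by induction on $l$. For $c \in \cC^{[l]}$ with $\pi_{\cL}(c) \in \cB'$ already defined and a generator $a_i$, the forced rule is
\[
  \pi_{\cL}(a_i c) \;:=\; \pi\bigl(a_i \, \pi_{\cL}(c)\bigr),
\]
which makes sense because $a_i \pi_{\cL}(c) \in a_i \cB' \subseteq \cB^{[1]} \subseteq \cB^{[m]} \subseteq \cC$ (we may assume $m \geq 1$).

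The main obstacle is \emph{well-definedness} of the recursion on $\cC^{[l+1]}$, i.e. independence from the representation $a = \sum_k \alpha_k a_{i_k} c_k$ of a typical element. Any such ambiguity corresponds to an element of the defining ideal $\cI = \ker(\sigma: \cF \to \cA)$. Here the hypothesis $\delta(\cA) \leq 2m$ is essential: pick a generating set of $\cI$ of index at most $2m$; combined with $\cB^{[m]} \subseteq \cC$, this allows one to test the vanishing of $w := \sum_k \alpha_k \pi(a_{i_k} \pi_{\cL}(c_k)) \in \cB'$ by pairing, via $L$, against arbitrary $b \in \cC$. One rewrites $L(b^{\ast} w)$ using the ideal generators so that each intermediate term lives inside $\cC^2$, where $L$ is already defined, and then invokes the flatness identity $\cC = \cB + K_L(\cC)$ together with $c_k - \pi_{\cL}(c_k) \in K_L(\cC)$ (available inductively) to collapse the expression to zero. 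Thus $w \in \cB' \cap K_L(\cC) = \{0\}$, and the rule descends.

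Once $\pi_{\cL}$ is built, the remaining items fall into place. For $a \in \cC$, the identity $L(a) = L(\pi(a))$ holds because $a - \pi(a) \in K_L(\cC)$ pairs trivially with $1 \in \cC$; this yields $\cL|_{\cC} = L$, and an analogous argument on products gives $\cL|_{\cC^2} = L$. Hermiticity of $\cL$ reduces to hermiticity of $L$ on $(\cB')^{\ast} \cB' \subseteq \cC^2$. The flat-extension property $\cA = \cC + K_{\cL}(\cA)$ is immediate from $a - \pi_{\cL}(a) \in K_{\cL}(\cA)$, a direct consequence of the recursion, together with $\pi_{\cL}(a) \in \cB' \subseteq \cC$. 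Uniqueness follows because any flat extension with respect to $\cC$ must satisfy the same forced recursion. The $\ast$-representation property $\rho_{\cL}(ab) = \rho_{\cL}(a)\rho_{\cL}(b)$ reduces to $\pi_{\cL}(a \cdot v) = \pi_{\cL}\bigl(a \, \pi_{\cL}(v)\bigr)$ for all $a, v \in \cA$, again proved by induction on the index, and the $\ast$-compatibility comes from the hermiticity of the form. Finally, positivity of $L$ makes $\langle\cdot,\cdot\rangle_L$ positive semi-definite on $\cC$ with radical exactly $K_L(\cC)$, hence a scalar product on $\cB'$; and $\cL(a^{\ast} a) = \langle \rho_{\cL}(a) 1, \rho_{\cL}(a) 1 \rangle_{\cL} \geq 0$.
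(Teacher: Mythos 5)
Your proposal is correct and takes essentially the same route as the paper: your recursion $\pi_{\cL}(a_i c)=\pi(a_i\,\pi_{\cL}(c))$ is precisely the paper's GNS-type map $\phi(p)=p(X)(1)$ built from the operators $X_i b=\pi(a_i b)$ on $\cB'$, with $\cL=L\circ\phi$, and your well-definedness step is the paper's verification that $\phi$ annihilates the ideal generators (Proposition \ref{prop:5}). The one point your sketch compresses is the collapse of $L(b^{\ast}w)$ to zero, which in the paper is Lemma \ref{lem:12} ($\cK^{[2m-1]}\cap\cC\subseteq K_L(\cC)$), proved by splitting a word of length at most $2m-1$ into factors of index at most $m$ and $m-1$ so that both can be pushed into $\cC$ --- exactly where the hypotheses $\cB^{[m]}\subseteq\cC$ and $2m\geq\delta(\cA)$ are consumed.
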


The remaining part of this section, is devoted to the proof of this theorem.
Let us recall the main assumptions.
Since the vector space $\cC$ is connected to $1$,  there exists
a sequence of \ vector spaces $\cC_0 \subset \cC_1 \subset \cdots \subset \cC_i \subset
\cdots \subset \cC$ such that $\cC_{i + 1} \subset \cC_i^+$ and $\cup_i \cC_i = \cC$.
%Their image by $\sigma$ is denoted $\cC_i = \sigma ( C_i)$.
%Let $G$ be a generating set of the kernel $\cI$ of $\sigma$. Let $2 m
%\in \mathbb{N}$ be a bound on the index of the elements in $G$. We assume
%that $B^{[ m]} \subseteq C$.
That the hermitean linear functional $L$  on $\cC^2$ is a flat extension means that
\[ \cC =\cB+ K_L ( \cC). \]
%where $K_L ( \cC) \assign \{a \in \cC : \langle a, b \rangle_L
%= 0, \forall b \in \cC \}$.
%}
A first consequence of these hypotheses is the following property:
\begin{eqnarray}
  a \in K_L ( \cC) & \Leftrightarrow & \forall b \in \cC, L
  (b^{\ast} a) = 0  \label{eq:inK1}\\
  & \Leftrightarrow & \forall b \in \cB, L (b^{\ast} a) = 0 
  \label{eq:inK2}
\end{eqnarray}
%Let us choose a linear subspace $\cB'$ of $ \cB$ such that $\cB'$ contains $1$ and $$\cC =\cB' \oplus K_L ( \cC).$$ 
%Let $\pi : \cC \rightarrow \cB'$ be the projection of
%$\cC$ along $K_L ( \cC)$ on 
%$\cB'$ and let $\cK= ( \cB')^+ \cap K_L ( \cC)$.
%Let $\pi : \cC \rightarrow \cB'$ be the projection of
%$\cC$ along $K_L ( \cC)$ on a supplementary space
%$\cB' \subset \cB$. We assume that $\cB'$ contains $1$. Let
%$\cK= ( \cB')^+ \cap K_L ( \cC)$.

\subsection*{Coherence}
The flat extension property induces a structure of $K_L ( \cC)$,
which we are going to analyze.
We first show that $K_L ( \cC)$ is stable by prolongation and
intersection with $\cC$.
\begin{lemma}
  \label{lem:0} $K_L ( \cC)^+ \cap \cC \subset K_L (\cC)$.
\end{lemma}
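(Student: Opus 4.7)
The plan is to unwind the definition of prolongation and then apply the sharper characterization \eqref{eq:inK2}, which lets us test membership in $K_L(\cC)$ against the smaller space $\cB$ only. Let $a \in K_L(\cC)^+ \cap \cC$; by definition of $V^+$ I may write $a = \sum_{j} a_{i_j} v_j$ with $v_j \in K_L(\cC) \subseteq \cC$. The goal is to verify that $L(b^* a) = 0$ for every $b \in \cB$, for then \eqref{eq:inK2} forces $a \in K_L(\cC)$.

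For a test element $b \in \cB$ I would rewrite
\[ L(b^* a) \;=\; \sum_j L(b^* a_{i_j} v_j) \;=\; \sum_j L\bigl((a_{i_j}^* b)^* v_j\bigr) \;=\; \sum_j \langle v_j, a_{i_j}^* b \rangle_L. \]
Since the generators are hermitian (each $a_i^*$ is again a generator $a_{i'}$), the element $a_{i_j}^* b$ lies in $\cB^+$. The standing hypothesis $\cB^{[m]} \subseteq \cC$ with $m \geq 1$ (which is forced in the only non-trivial case of $2m \geq \delta(\cA)$) then gives $a_{i_j}^* b \in \cC$. Because $v_j \in K_L(\cC)$, each inner product $\langle v_j, a_{i_j}^* b\rangle_L$ vanishes, so $L(b^* a) = 0$, as required.

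I do not anticipate any real difficulty: the content of the lemma is just the structural fact that $K_L(\cC)$ is stable under left multiplication by a generator as long as the product still lies in $\cC$. The one point worth emphasizing is that the argument genuinely needs the ``test against $\cB$'' form \eqref{eq:inK2}: testing instead against all of $\cC$ via \eqref{eq:inK1} would require $a_{i_j}^* c \in \cC$ for every $c \in \cC$, which is not granted by the hypotheses; the hypothesis $\cB^{[m]} \subseteq \cC$ is precisely calibrated to make the restricted test work.
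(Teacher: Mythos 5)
Your proof is correct and is essentially identical to the paper's: both decompose $a=\sum_j a_{i_j}v_j$ with $v_j\in K_L(\cC)$, move the generator across the form to get $\langle v_j, a_{i_j}^*b\rangle_L$ with $a_{i_j}^*b\in\cB^+\subseteq\cC$, and conclude via the test-against-$\cB$ criterion \eqref{eq:inK2}. Your added remarks on the hermitian closure of the generating set and on why $\cB^+\subseteq\cC$ follows from the standing hypotheses are points the paper leaves implicit, but they do not change the argument.
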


\begin{proof}
  For any element $\kappa \in K_L ( \cC)^+$, we have
  \[ \kappa = \sum_{j \in J} a_j \kappa'_j, \]
  with $J \subset I$, $\kappa'_j \in K_L ( \cC)$. For any $b \in B$, \
  \begin{eqnarray*}
    L (b^{\ast} k) & = & \sum_{j \in J} L (b^{\ast} a_j \kappa'_j) = 0
  \end{eqnarray*}
  since $b^{\ast} a_j \in \cB^+ \subset \cC$ and $\kappa'_j
  \in K_L ( \cC)$. The relation $\left( \ref{eq:inK2} \right)$ implies
  that $\kappa \in K_L ( \cC)$.
\end{proof}

\begin{remark}
There is no loss of generality to assume that $1 \in \cB'$. Otherwise 
  $1 \in K_L ( \cC)$. By Lemma \ref{lem:0}, $\cC_1 \subset (
  \cC_0)^+ \cap \cC \subset K_L ( \cC)$. By induction,
  we deduce that $\forall l \in \mathbb{N}, \cC_l \subset
  K_L ( \cC)$, which implies that $\cC= \cup \cC_l =
  K_L ( \cC)$. Consequently, $L \equiv 0$. But in this case the
  assertion of Theorem \ref{thm:main} is trivial.
\end{remark}
The next lemma shows that $K_L ( \cC)$ contains iterated
prolongations of $\cK$ intersected with $\cC$. As we will see, $\cK$
 generates the left ideal $K_\cL ( \cC)$ of the extension $\cL$.
\begin{lemma}
  \label{lem:12}$\cK^{[ 2 m - 1]} \cap \cC \subset K_L (
  \cC)$.
\end{lemma}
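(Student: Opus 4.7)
The plan is to induct on $l$, proving $\cK^{[l]} \cap \cC \subseteq K_L(\cC)$ for every $l \in \{1,\dots,2m-1\}$; the base case $l=1$ is exactly Lemma~\ref{lem:0}.

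For the inductive step from $l$ to $l+1$ (with $l+1 \le 2m-1$), take $\kappa \in \cK^{[l+1]}\cap\cC$. By the equivalence $\left(\ref{eq:inK2}\right)$, it suffices to prove $L(b^{\ast}\kappa) = 0$ for every $b \in \cB$. The space $\cK^{[l+1]}$ is spanned by monomials $a_{i_0} a_{i_1} \cdots a_{i_l}\,k$ with $k \in K_L(\cC)$, and since the generators are hermitian, by linearity the computation reduces to
\[
L(b^{\ast}\kappa) \;=\; L(\tilde b^{\ast}\,k),\qquad \tilde b := a_{i_l} a_{i_{l-1}} \cdots a_{i_0}\,b \in \cB^{[l+1]},
\]
which is well-defined because $b^{\ast}\kappa \in \cC^2$. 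If $l+1 \le m$, then $\tilde b \in \cB^{[m]} \subseteq \cC$ and the value is $\langle k, \tilde b\rangle_L = 0$ at once, since $k \in K_L(\cC)$.

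In the remaining regime $m < l+1 \le 2m-1$, I would reduce $\tilde b$ iteratively by flat extension. Building $\tilde b$ one generator at a time, after $m$ applications the running element lies in $\cB^{[m]} \subseteq \cC$ and splits as $\beta + k'$ with $\beta \in \cB,\ k' \in K_L(\cC)$ by the flat extension hypothesis $\cC = \cB + K_L(\cC)$. Each further generator distributes over this decomposition: the $\cB$-part returns to $\cB^{+} \subseteq \cC$ and can be reduced afresh, while the $K_L(\cC)$-part moves up one level in the prolongations of $K_L(\cC)$. After all $l+1$ generator applications one obtains
\[
\tilde b = \beta_{\ast} + \sum_{r} \gamma_r,\qquad \beta_{\ast} \in \cB,\ \gamma_r \in \cK^{[r]},\ r \le l+1-m \le l.
\]
Pairing with $k$, the $\beta_{\ast}$-term contributes $\langle k,\beta_{\ast}\rangle_L = 0$; each $\gamma_r^{\ast} k$-term is handled by transferring the generators composing $\gamma_r^{\ast}$ back onto $k$ (using that generators are hermitian), recasting the contribution as a pairing of an element of $K_L(\cC)$ with an element of $\cK^{[r]}$ for some $r \le l$, which vanishes by the inductive hypothesis combined with hermiticity of $\langle\cdot,\cdot\rangle_L$.

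The main obstacle is bookkeeping: the individual $\gamma_r$ need not lie in $\cC$, so $L$ cannot be split term-by-term; only the total $\sum_r \gamma_r^{\ast} k = \tilde b^{\ast} k - \beta_{\ast}^{\ast} k$ is a priori in $\cC^2$. Managing this carefully requires the operator relations $L(c^{\ast} a b) = \langle \rho(a) b, c\rangle_L = \langle b, \rho(a^{\ast}) c\rangle_L$ of Section~2 to keep every intermediate expression inside $\cC^2$. The condition $2m \ge \delta(\cA)$ is what guarantees that the relations of $\cA$ (presentable at index $\le 2m$) are respected throughout the rewrite and that every prolongation level appearing remains below the inductive threshold $l$.
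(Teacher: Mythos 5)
Your proposal does not close; the difficulty you name in your last paragraph is exactly the one the paper's proof is designed to avoid, and your plan for getting around it (iterated flat reduction of $\tilde b\in\cB^{[l+1]}$) is not carried out and, as stated, cannot be: once you push \emph{all} $l+1$ generators onto $b$, you are pairing $k\in\cK$ against $\tilde b\in\cB^{[l+1]}$ with $l+1$ possibly as large as $2m-1>m$, so $\tilde b$ has left $\cC$, the individual products $\tilde b^{\ast}k$ need not lie in $\cC^2$, and $L(\tilde b^{\ast}k)$ is simply undefined. The subsequent decomposition $\tilde b=\beta_{\ast}+\sum_r\gamma_r$ with $\gamma_r\in\cK^{[r]}$ does not repair this, because (as you concede) the $\gamma_r$ need not lie in $\cC$, so neither the term-by-term application of $L$ nor the appeal to the inductive hypothesis $\cK^{[r]}\cap\cC\subseteq K_L(\cC)$ (which only controls elements of $\cK^{[r]}$ that are \emph{in} $\cC$) is available. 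The invocation of $2m\ge\delta(\cA)$ is also a red herring here: that hypothesis plays no role in this lemma (it is used later, for Proposition \ref{prop:5}).

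The missing idea is a \emph{balanced} factorization of the word rather than a full transfer onto $b$. Writing a summand of $\kappa$ as $\fa_1\fa_2\kappa'$ with $\fa_1$ of index $\le m$ and $\fa_2$ of index $\le m-1$, one has $\fa_2\kappa'\in\cK^{[m-1]}\subseteq\cB^{[m]}\subseteq\cC$ and $\fa_1^{\ast}\fb\in\cB^{[m]}\subseteq\cC$ for every $\fb\in\cB$, so \emph{every} intermediate pairing $L\bigl((\fa_1^{\ast}\fb)^{\ast}(\fa_2\kappa')\bigr)$ is a product of two elements of $\cC$ and hence lies in the domain $\cC^2$ of $L$. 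One then checks directly (no induction needed) that $\fa_2\kappa'\in K_L(\cC)$, since $\langle\fa_2\kappa',\fb'\rangle_L=\langle\kappa',\fa_2^{\ast}\fb'\rangle_L=0$ with $\fa_2^{\ast}\fb'\in\cB^{[m]}\subseteq\cC$, and concludes $L(\fb^{\ast}\kappa)=0$ for all $\fb\in\cB$, whence $\kappa\in K_L(\cC)$ by \eqref{eq:inK2}. Your first regime ($l+1\le m$) is a special case of this, but the split into a prefix of index $\le m$ and a suffix of index $\le m-1$ --- which is precisely what the bound $2m-1$ in the statement permits --- is the step your argument lacks.
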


\begin{proof}
  Let $\kappa \in \cK^{[ 2 m - 1]} \cap \cC \nosymbol$. It is
  a sum of terms of the form $a_{i_1} \cdots a_{i_l} \kappa'$ with $l
  \leqslant 2 m - 1$, $i_k \in I$, $\kappa' \in \cK$. We decompose
  $a_{i_1} \cdots a_{i_l} =\fa_1 \fa_2$ as a product of a
  term $\fa_1$ of index $\leqslant m$ and a term $\fa_2$ of
  index $m - 1$. Then, we have $\forall \fb \in \cB$,
  $\langle \fa \kappa' | \fb \nobracket  \rangle = \langle
  \fa_2 \kappa' | \fa_1^{\ast} \fb \nobracket 
  \rangle$.
  
  By hypothesis, $\cB^{[ m]} \subset \cC$ and $\cK \subset \cB^{[ 1]}$, \
  and $\fa_2 \kappa' \in \cK^{[ m - 1]} \subset
  \cB^{[ m]}\subset \cC$.
  
  Let us prove that $\fa_2 \kappa' \in K_L ( \cC)$: $\forall
  \fb' \in \cB$, $\fa_2^{\ast} \fb' \in
  \cB^{[ m]} \subset \cC$. Relation $( \ref{eq:inK1})$ implies
  that
  \[ \langle \fa_2 \kappa' | \fb \nobracket'  \rangle =
     \langle \kappa' | \fa_2^{\ast} \fb \nobracket'  \rangle
     = 0. \]
  Relation $( \ref{eq:inK2})$ implies that $\fa_2 \kappa' \in K_L (
  \cC)$.
  
  By hypothesis, we also have $\forall \fb
  \in \cB$, $\fa_1^{\ast} \fb \in
  \cB^{[ m]} \subset \cC$ \ and by relation $(
  \ref{eq:inK1})$, 
$$\langle \fa_2 \kappa' | \fa_1^{\ast}
  \fb  \rangle = 0 = \langle \fa \kappa' |
  \fb  \rangle .
$$ 
We deduce that \ $\forall \fb \in \cB$, 
$\langle \kappa | \fb \nobracket  \rangle = 0,$
  which implies, by relation $( \ref{eq:inK2})$, that $\kappa \in K_L (
  \cC)$. This ends the proof of this Lemma.
\end{proof}

\subsection*{The GNS construction}
We describe now a GNS-type construction, which allows to extend $L$ on
$\cA$. For $i \in I$, we define the following operator
\begin{eqnarray*}
  X_i : \cB & \rightarrow & \cB\\
  b & \mapsto & \pi (a_i b)
\end{eqnarray*}
and the map
\begin{eqnarray*}
  \phi : \cF & \xrightarrow{} & \cB\\
  p & \mapsto & p (X) (1).
\end{eqnarray*}
The kernel of $\phi$ is denoted $\cJ$. We easily check that
$\cJ$ is a left ideal of $\cF = \CC \langle x_i, i \in
I \rangle$. Let $B, C\subset \cF$ be $\ast$-invariant linear
  subspaces of $\cF$, connected to $1$ such that $\sigma (B)= \cB$ and
  $\sigma (C)= \cC$. We denote by $S$ a generating set of the two-sided
  ideal $\cI=\ker \sigma$ such that $\delta (S)= \delta (\cA)$ is minimal.

\begin{lemma}
  \label{lem:5}$\forall p, q \in C, \phi (p q) = \sigma ( p) \phi (q) +
  \kappa_1 \phi ( q) + \kappa_0$ with $\kappa_0 \in \cK \subset K_L (
  \cC), \kappa_1 \in K_L ( \cC)$. If l is the index of $p$
  then {\color{red} ${\color{black} \phi (p q) - \sigma ( p) \phi (q) \in
  \cK^{[l - 1]} .}$}
\end{lemma}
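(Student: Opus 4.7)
The plan is to induct on the $C$-index $l$ of $p$, with a single basic error identity as the engine. The key observation is that for every $b \in \cB$ and every generator $a_i$,
$$a_i b \;=\; X_i(b) + \bigl(a_i b - \pi(a_i b)\bigr),$$
and since $a_i b \in \cB^{[1]} \subset \cC$, the error term $a_i b - \pi(a_i b)$ lies in $K_L(\cC) \cap \cB^{[1]}$, which is exactly the subspace $\cK$ that appears in Lemma \ref{lem:12}. So one step of $X_i$ differs from algebra-multiplication by $a_i$ by a primitive $\cK$-error.

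The base $l = 0$ (so $p \in \CC \cdot 1$) is immediate from linearity of $\phi$ and $\sigma$, with $\kappa_0 = \kappa_1 = 0$. For the inductive step, connectedness of $C$ to $1$ lets us write $p$ as a linear combination of elements of the form $x_{i_1} p'$ with $p' \in C$ of index at most $l - 1$, and linearity of both sides in $p$ reduces to a single such element. Then
$$\phi(pq) \;=\; X_{i_1}\bigl(\phi(p'q)\bigr) \;=\; \pi\bigl(a_{i_1}\phi(p'q)\bigr) \;=\; a_{i_1}\phi(p'q) + \kappa'',$$
where $\kappa'' := \pi(a_{i_1}\phi(p'q)) - a_{i_1}\phi(p'q) \in \cK$ by the key identity applied to $\phi(p'q) \in \cB$. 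Substituting the inductive claim $\phi(p'q) = \sigma(p')\phi(q) + r$ with $r \in \cK^{[l-2]}$ and using $\sigma(x_{i_1})\sigma(p') = \sigma(p)$ gives
$$\phi(pq) - \sigma(p)\phi(q) \;=\; a_{i_1} r + \kappa'' \;\in\; (\cK^{[l-2]})^+ + \cK \;\subset\; \cK^{[l-1]},$$
which settles the index claim.

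For the finer decomposition $\phi(pq) = \sigma(p)\phi(q) + \kappa_1 \phi(q) + \kappa_0$ with $\kappa_1 \in K_L(\cC)$ and $\kappa_0 \in \cK$, I would carry a strengthened hypothesis along: at level $l-1$, split $r = \kappa_1'\phi(q) + \kappa_0'$ with $\kappa_1' \in K_L(\cC)$, $\kappa_0' \in \cK$. Then
$$a_{i_1} r + \kappa'' \;=\; (a_{i_1}\kappa_1')\phi(q) + a_{i_1}\kappa_0' + \kappa'',$$
and Lemma \ref{lem:0} guarantees $a_{i_1}\kappa_1' \in K_L(\cC)$ once the prolongation is seen to lie inside $\cC$, which holds by the hypothesis $\cB^{[m]} \subset \cC$.

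The main technical obstacle is keeping $\kappa_0$ inside the primitive space $\cK \subset \cB^{[1]}$: a term of the form $a_{i_1}\kappa_0'$ with $\kappa_0' \in \cK$ lives a priori in $\cB^{[2]}$ and cannot be put into $\kappa_0$. One has to re-expand $\kappa_0'$ along its primitive generators $a_j b_j - X_j(b_j)$ and move the higher-prolongation pieces into the $\kappa_1 \phi(q)$ slot, again via Lemma \ref{lem:0} to preserve the $K_L(\cC)$-membership of $\kappa_1$. This separation---terminal primitive $\cK$-error on one side, accumulated $K_L(\cC)$-contribution on the other---is the only delicate part of the bookkeeping.
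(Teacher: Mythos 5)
Your induction is the paper's own: decompose $p$ through the prolongation structure of $C$, use the primitive identity $a_ib=\pi(a_ib)+(a_ib-\pi(a_ib))$ with error in $\cK$, and invoke Lemma \ref{lem:0} to keep the accumulated coefficient $\kappa_1$ inside $K_L(\cC)$. Your proof of the second assertion, $\phi(pq)-\sigma(p)\phi(q)\in\cK^{[l-1]}$, is correct and is exactly the paper's argument (with $\cK^{[l-1]}$ read cumulatively, as the paper does in Lemma \ref{lem:12}). You have also correctly isolated the one genuinely delicate point, which the paper's displayed computation silently elides: when you substitute the inductive decomposition $\phi(p'q)=\sigma(p')\phi(q)+\kappa_1'\phi(q)+\kappa_0'$ and left-multiply by $a_{i_1}$, the term $a_{i_1}\kappa_0'$ belongs to neither slot of the claimed normal form.

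However, your proposed repair does not work. You suggest re-expanding $\kappa_0'$ along its primitive generators and ``moving the higher-prolongation pieces into the $\kappa_1\phi(q)$ slot,'' but that slot only accepts elements that are left multiples of the fixed vector $\phi(q)$, and a term such as $a_{i_1}\bigl(a_jb-\pi(a_jb)\bigr)$ is not of that form for general $q$ (only the degenerate case $q=1$, $\phi(q)=1$, would allow it). Lemma \ref{lem:0} cannot convert an element of $\cK^{[1]}$ into a multiple of $\phi(q)$; it only certifies membership in $K_L(\cC)$. The correct resolution is to weaken the first assertion: allow $\kappa_0\in\cK^{[l-1]}\cap\cC$, which by Lemma \ref{lem:12} lies in $K_L(\cC)$ whenever $l-1\leq 2m-1$. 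With $\kappa_0,\kappa_1\in K_L(\cC)$ the statement survives every subsequent use (Propositions \ref{prop:6} and \ref{prop:8} only need $L$ to annihilate $\kappa_1\phi(q)$ and $\kappa_0$ against elements of $\cC$, and Proposition \ref{prop:5} uses only the second assertion). You should also note that placing $a_{i_1}\kappa_1'$ in $K_L(\cC)$ via Lemma \ref{lem:0} requires $a_{i_1}\kappa_1'\in\cC$, a containment you assert from $\cB^{[m]}\subset\cC$ but which deserves a line of justification tracking where $\kappa_1'$ actually lives; the paper is equally terse on this point.
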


\begin{proof}
  We prove the property by induction on the index $l$ of $p$.
  
  The property is obviously true for $l = 0$, in which case we can take $p = 1$
  and $\phi ( 1) = \sigma ( 1) = 1$.
  
  Let us assume that it is true for $l \in \mathbb{N}$ and let $p \in C$ of
  index $l + 1$. Then $\sigma ( p) = \sum_{j \in J} a_j \sigma ( p'_j)$ with
  $p'_j \in C$ of index $\leqslant l$.{\tmstrong{}} By induction hypothesis,
  we have
  \begin{eqnarray*}
    \phi (pq) & = & \sum_{j \in J} \pi ( a_j \phi (p'_j q))\\
    & = & \sum_{j \in J} a_j  \phi (p'_j q) + \kappa_0\\
    & = & \sum_{j \in J} a_j  ( \sigma (p'_j) \phi ( q) + \kappa_j' \phi (
    q)) + \kappa_0\\
    & = & \sigma ( p) \noplus \phi ( q) + \kappa_1 \phi ( q) \noplus +
    \kappa_0
  \end{eqnarray*}
  with $\kappa_0 \nocomma \in \cK \subset K_L ( \cC),
  \kappa_j' \in K_L ( \cC)$ and {\color{black} ${\color{red}
  {\color{black} \kappa_1 = \sum_{j \in J} a_j \kappa_j' \in K_L (
  \cC)^+ \cap \cC}}$}. By Lemma \ref{lem:0}, $\kappa_1 \in K_L
  ( \cC)$. By induction hypothesis, \ $\phi (p'_j q) - \sigma (p'_j)
  \phi ( q) \in \cK^{[ l - 1]}${\tmstrong{}}. Therefore
  \[ \phi (pq) - \sigma ( p) \noplus \phi ( q) = \sum_{j \in J} a_j  ( \phi
     (p'_j q) - \sigma (p'_j) \phi ( q)) + \kappa_0 \in ( \cK^{^{[ l -
     1]}})^+ =\cK^{[ l]} \]
  which proves the induction hypothesis for $l + 1$ and concludes the proof of
  this Lemma.
\end{proof}

{\color{black} \begin{proposition}
  \label{prop:5}$\forall g \in S, \forall b, b' \in B,$ $\phi ( b g b') = 0$.
\end{proposition}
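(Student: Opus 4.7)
The plan is to reduce the assertion to the special case $b=1$, i.e.\ to proving $\phi(gb')=0$ for every $g\in S$ and every $b'\in B$. Once this is known, the full statement follows at once because $p\mapsto p(X)$ is an algebra homomorphism from $\cF$ into the linear operators on $\cB'$, so that $\phi(pq)=p(X)\phi(q)$; applying this with $p=b$, $q=gb'$ gives
\[
\phi(bgb') \;=\; b(X)\,\phi(gb') \;=\; b(X)(0) \;=\; 0.
\]

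To establish $\phi(gb')=0$ I would apply Lemma \ref{lem:5} with $p=g$ and $q=b'$. Since $g\in S\subseteq\cI=\ker\sigma$ we have $\sigma(g)=0$, and the ``moreover'' part of the lemma then gives
\[
\phi(gb') \;=\; \phi(gb') - \sigma(g)\phi(b') \;\in\; \cK^{[l-1]},
\]
where $l$ is the index of $g$ in $\cF$. By the very choice of $S$, every element of $S$ has index at most $\delta(S)=\delta(\cA)\le 2m$, so $l-1\le 2m-1$ and hence $\phi(gb')\in\cK^{[2m-1]}$.

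On the other hand, $\phi$ takes values in $\cB'\subseteq\cC$, so $\phi(gb')\in\cC\cap\cK^{[2m-1]}$, and Lemma \ref{lem:12} forces $\phi(gb')\in K_L(\cC)$. Because $\cC=\cB'\oplus K_L(\cC)$, the intersection $\cB'\cap K_L(\cC)$ is $\{0\}$, so $\phi(gb')=0$ as required.

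The main thing to be careful about is that Lemma \ref{lem:5} is stated for $p,q$ in the auxiliary space $C\subset\cF$, whereas here $p=g$ is only known to lie in $S\subset\cF$. However, an inspection of the inductive argument of that lemma shows that what is actually used is the existence of a filtration connected to $1$ in which the argument has finite index, and $\cF$ itself carries such a filtration via its natural $\CC\cdot 1$-index; so the lemma applies verbatim to $p=g\in\cF$.
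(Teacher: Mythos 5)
Your proof is correct and follows essentially the same route as the paper: apply Lemma \ref{lem:5} with $p=g$, $q=b'$, use $\sigma(g)=0$ to place $\phi(gb')$ in $\cK^{[2m-1]}\cap\cB'$, invoke Lemma \ref{lem:12} together with $\cB'\cap K_L(\cC)=\{0\}$, and finish with $\phi(bgb')=b(X)\phi(gb')$. Your closing remark about extending Lemma \ref{lem:5} from $p\in C$ to general $p\in\cF$ of finite index is a legitimate point of care that the paper passes over silently, and your resolution of it is sound.
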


\begin{proof}
  By Lemma \ref{lem:5}, $\phi ( g b') = \sigma ( g) \phi ( b') \noplus +
  \kappa = \kappa$ with $\kappa \in \cK^{[ 2 m - 1]}$. As $\phi ( g
  b') \in \cB' \subset \cB,$ by Lemma \ref{lem:12}, $\kappa
  \in K_L ( \cC) \cap \cB' = \{ 0 \} .$ This implies that
  $\phi ( g b') = 0$. Consequently, $\phi ( b g b') = \phi ( b ( X) \phi ( g
  b')) = 0$
\end{proof}}

\begin{proposition}
  \label{prop:6}$\forall p \in C$, $\phi (p) = \pi (\sigma (p))$.
\end{proposition}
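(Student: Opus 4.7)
The plan is to read off this proposition as a direct corollary of Lemma \ref{lem:5}, by specialising its second variable to $1$ and then exploiting the direct-sum decomposition $\cC = \cB' \oplus K_L(\cC)$.

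First I would record two basic facts about $\phi$. Each operator $X_i$ takes values in $\cB'$, because $\pi$ projects $\cC$ onto $\cB'$, and $1 \in \cB'$; a short induction on the length of a monomial therefore gives $\phi(p) \in \cB'$ for every $p \in \cF$. In particular $\phi(1) = 1$, since the empty composition is just the identity applied to $1$. Note also that $\phi$ is well-defined on all of $\cF$ because $\cB^{[m]} \subseteq \cC$ with $m \geq 1$ guarantees that $a_i b \in \cC$ whenever $b \in \cB' \subseteq \cB$, so $\pi(a_i b)$ makes sense.

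Next, I apply Lemma \ref{lem:5} with $q = 1 \in C$ (which lies in $C$ since $C$ is connected to $1$). Combined with $\phi(1) = 1$, the lemma produces $\kappa_0, \kappa_1 \in K_L(\cC)$ with
$$\phi(p) = \sigma(p) + \kappa_1 + \kappa_0,$$
so $\phi(p) - \sigma(p) \in K_L(\cC)$. Since $p \in C$ and $\sigma(C) = \cC$, we have $\sigma(p) \in \cC$, and the chosen decomposition $\cC = \cB' \oplus K_L(\cC)$ allows us to write $\sigma(p) = \pi(\sigma(p)) + k$ with $k \in K_L(\cC)$.

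Combining the two identities yields $\phi(p) - \pi(\sigma(p)) = (\phi(p) - \sigma(p)) + k \in K_L(\cC)$, while the same difference lies in $\cB'$ by the first step. Since $\cB' \cap K_L(\cC) = \{0\}$ by the very definition of the direct sum, we conclude $\phi(p) = \pi(\sigma(p))$. There is no real obstacle here: the substantive content is already packaged in Lemma \ref{lem:5}, and the only point needing care is to confirm that $\phi(p)$ actually lands in $\cB'$, so that the direct-sum intersection argument can close the gap.
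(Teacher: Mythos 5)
Your proposal is correct and follows essentially the same route as the paper: apply Lemma \ref{lem:5} with $q=1$ to get $\phi(p)-\sigma(p)\in K_L(\cC)$, observe $\phi(p)\in\cB'$, and conclude via the direct sum $\cC=\cB'\oplus K_L(\cC)$ that $\phi(p)$ is the projection $\pi(\sigma(p))$. You merely spell out the direct-sum intersection argument that the paper compresses into the phrase ``$\phi(p)$ is the projection of $\sigma(p)$ on $\cB'$ along $K_L(\cC)$.''
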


\begin{proof}
  Let $p \in C$. Applying Lemma \ref{lem:5} with $q = 1$, we have $\phi ( p) =
  \sigma ( p) \noplus + \kappa$, $\kappa \in K_L ( \cC)$.
  
  As $\sigma ( p) \in \cC, \phi (p) \in \cB'$ and $\kappa \in
  K_L ( \cC)$, $\phi (p)$ is the projection of $\sigma ( p)$ on
  $\cB'$ along $K_L ( \cC)$. In other words, $\phi (p) = \pi (\sigma ( p))$,
  which proves the proposition.
\end{proof}\\
As $\cI=\ker \sigma$ is the left-right ideal generated by $S$, 
this proposition shows that $\phi$ factors through a map $\overline{\phi}$ on
$\cA=\cF/\cI$. 
We prove now that $\ker \overline{\phi}$ is the left ideal generated by $\cK$.
\begin{proposition}
  \label{prop:13}$\sigma ( \ker \phi) = \cA \cdummy \cK$.
\end{proposition}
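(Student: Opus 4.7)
The plan is to prove both inclusions, treating $\sigma(\ker \phi)$ as a left ideal of $\cA$. The basic structural observation is that the definition $\phi(p) = p(X)(1)$ yields $\phi(pq) = p(X)(\phi(q))$ for all $p, q \in \cF$, so $\ker \phi$ is a left ideal of $\cF$; since $\sigma$ is surjective, $\sigma(\ker \phi)$ is a left ideal of $\cA$.

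For the inclusion $\sigma(\ker \phi) \supseteq \cA \cdot \cK$ it suffices to verify $\cK \subseteq \sigma(\ker \phi)$. Given $\kappa \in \cK \subseteq K_L(\cC) \subseteq \cC = \sigma(C)$, lift it to $\tilde\kappa \in C$ with $\sigma(\tilde\kappa) = \kappa$. Proposition \ref{prop:6} gives $\phi(\tilde\kappa) = \pi(\kappa)$, and this vanishes because $\pi$ projects onto $\cB'$ along $K_L(\cC)$ and $\kappa \in K_L(\cC)$. Thus $\tilde\kappa \in \ker \phi$, whence $\kappa \in \sigma(\ker \phi)$, and closing under the $\cA$-action completes this direction.

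For the converse I would prove by induction on the length $l$ of a monomial $p = x_{i_1}\cdots x_{i_l} \in \cF$ that $\sigma(p) - \phi(p) \in \cA \cdot \cK$; linearity then extends this to all of $\cF$, and specializing to $p \in \ker \phi$ gives $\sigma(p) \in \cA \cdot \cK$. The base case $l = 0$ is immediate since $\sigma(1)=1=\phi(1)$. For the inductive step, writing $q = x_{i_2}\cdots x_{i_l}$, the identity $\phi(p) = \pi(a_{i_1}\phi(q))$ gives
\[
\sigma(p) - \phi(p) \;=\; a_{i_1}\bigl(\sigma(q) - \phi(q)\bigr) \;+\; \bigl(a_{i_1}\phi(q) - \pi(a_{i_1}\phi(q))\bigr).
\]
The first summand lies in $\cA \cdot \cK$ by the inductive hypothesis, and the second lies in $K_L(\cC)$ by the definition of $\pi$.

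The delicate point --- and what I expect to be the main obstacle --- is to verify that this second residue actually sits in $\cK$ itself, rather than merely in the larger space $K_L(\cC)$. Since $\phi(q) \in \cB' \subseteq \cB$, we have $a_{i_1}\phi(q) \in \cB^{[1]}$, so the residue lies in $\cB^{[1]} \cap K_L(\cC)$; this is precisely the shape of the $\kappa_0$ term appearing in Lemma \ref{lem:5}, so provided the paper's $\cK$ is taken to contain all such generator-sized residues (as is implicitly done there), the induction closes. Combining the two inclusions yields the proposition.
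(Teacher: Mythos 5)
Your proof is correct and follows essentially the same route as the paper: the same induction (on word length rather than on index, which is equivalent here) showing $\sigma(p)-\phi(p)\in\cA\cdot\cK$, and the same use of Proposition \ref{prop:6} together with the left-ideal property of $\ker\phi$ for the reverse inclusion. The worry you flag about the residue landing in $\cK$ rather than merely in $K_L(\cC)$ is resolved exactly as you guess: the paper's (never explicitly defined) $\cK$ is the span of the residues $a_i b-\pi(a_i b)$ with $b\in\cB'$, which is precisely the $\kappa_0$ term appearing in Lemma \ref{lem:5}.
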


\begin{proof}
  Let us prove by induction on the index $l$ of $p$ that $\forall p \in A,
  \phi (p) = \sigma ( p) + \kappa$ with $\kappa \in \cA \cdummy
  \cK$.
  
  If $p$ is of index $0$, then $p = 1$ and $\phi ( 1) = 1 = \sigma ( 1) = 1$.
  
  Let us assume that it is true for $l \in \mathbb{N}$ and let $p \in A$ of
  index $l + 1$. Then $p = \sum_{j \in J} x_j p'_j$ with $p'_j \in A$ of index
  $\leqslant l$.{\tmstrong{}} Then we have,
  \begin{eqnarray*}
    \phi (p) & = & \sum_{j \in J} \pi ( a_j \phi (p'_j))\\
    & = & \sum_{j \in J} a_j  \phi (p'_j) + \kappa_0\\
    & = & \sum_{j \in J} a_j  ( \sigma (p'_j) + \kappa_j') + \kappa_0\\
    & = & \sigma ( p) \noplus + \kappa_{}
  \end{eqnarray*}
  with $\kappa_0 \nocomma \in \cK$ and $\kappa = \sum_{j \in J} a_j
  \kappa_j' + \kappa_0 \in \cA \cdummy \cK$. This proves the
  induction hypothesis for $l + 1$.
  
  We deduce that if $a \in \ker \phi$, then \ $\phi ( a)$=0 and $\sigma ( a)
  \in \cA \cdummy \cK$, so that $\sigma ( \ker \phi) \subset
  \cA \cdummy \cK$.
  
  Conversely, $\forall \fa \in \cA, \forall \kappa \in
  \cK \overset{}{} \nocomma$, there exists $a \in \cF$, $k \in
  C$ such that $\sigma ( a) =\fa$ and $\sigma ( k) = \kappa$. By
  Proposition \ref{prop:6}, $\phi ( k) = \pi ( \kappa) = 0$ and $\phi ( a k) =
  a ( X) \phi ( k) = 0$. This shows that $a k \in \ker \phi$ and $\fa
  \kappa = \sigma ( a k) \in \sigma ( \ker \phi) .$ Therefore $\cA
  \cdummy \cK \subset \sigma ( \ker \phi)$.
\end{proof}

\subsection*{The extension of $L$}

We define the extension$\overset{}{}$ $\mathcal{L}$ of $L$ on $\cF$ by
\begin{eqnarray}\label{eq:Lext}
  \mathcal{L}: \cF & \rightarrow & \CC\\ \nonumber
  a & \mapsto & L (\phi (a)).
\end{eqnarray}
We recall that $\cA=\cF/\cI$ where $\cI$ is the left and right ideal degenerated by the elements of
$S$. By Proposition \ref{prop:5}, $\phi ( \cI) = 0$ and
{$\mathcal{L}$} induces a linear form on
$\cA=\cF/\cI$, that we still denote $\mathcal{L}$. If
$\fa= \sigma (a)$ with $a \in \cF$, then $\mathcal{L}
(\fa) = L (\phi (a)) = L (a (X) (1))$.

We first check that $\cL$ is an extension of $L$.
\begin{proposition}
  \label{prop:8}$\forall \fa, \fb \in \cC,
  \mathcal{L} (\fa\fb) = L (\fa\fb)$.
\end{proposition}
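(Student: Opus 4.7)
Choose lifts $a,b\in C$ with $\sigma(a)=\fa$ and $\sigma(b)=\fb$, so that by definition $\cL(\fa\fb)=L(\phi(ab))$. A one-sided expansion of $\phi(ab)$ via Lemma~\ref{lem:5} followed by a substitution of Proposition~\ref{prop:6} would leave a correction term $L(\kappa_{1}\fb)$ with $\kappa_{1}\in K_{L}(\cC)$ on the \emph{left} of a $\cC$-factor, which is not visibly zero since $K_{L}(\cC)$ is not $\ast$-invariant in general. I would therefore instead use the operator representation on $\cB'$ symmetrically in $a$ and $b$.

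The key preliminary is that $\psi\colon\cF\to\mathrm{End}(\cB')$, $\psi(p)=p(X)$, is a $\ast$-representation of $\cF$ on $(\cB',\langle\cdot,\cdot\rangle_{L})$. As $\psi$ is an algebra homomorphism, this reduces to the adjoint identity on generators, $\langle X_{i}c,d\rangle_{L}=\langle c,X_{i^{\ast}}d\rangle_{L}$ for $c,d\in\cB'$. Writing $X_{i}c=\pi(a_{i}c)=a_{i}c-\kappa$ with $\kappa\in K_{L}(\cC)$ gives $\langle X_{i}c,d\rangle_{L}=L(d^{\ast}a_{i}c)-L(d^{\ast}\kappa)=L(d^{\ast}a_{i}c)$, since $d\in\cB'\subset\cC$ and $\kappa\in K_{L}(\cC)$ force $L(d^{\ast}\kappa)=\langle\kappa,d\rangle_{L}=0$; a symmetric calculation produces the same expression for the right-hand side.

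Granting that $\psi$ is a $\ast$-representation, I obtain
\[
  \cL(\fa\fb)=\langle\phi(ab),1\rangle_{L}=\langle\psi(a)\phi(b),1\rangle_{L}=\langle\phi(b),\psi(a^{\ast})1\rangle_{L}=\langle\phi(b),\phi(a^{\ast})\rangle_{L},
\]
where $\phi(a^{\ast})$ is legitimate because $C$ is $\ast$-invariant. By Proposition~\ref{prop:6}, $\phi(b)=\fb-\kappa_{b}$ and $\phi(a^{\ast})=\fa^{\ast}-\kappa_{a^{\ast}}$ with $\kappa_{b},\kappa_{a^{\ast}}\in K_{L}(\cC)$, and sesquilinear expansion yields $\langle\fb,\fa^{\ast}\rangle_{L}=L(\fa\fb)$ as the main term, plus three correction terms $\langle\fb,\kappa_{a^{\ast}}\rangle_{L}$, $\langle\kappa_{b},\fa^{\ast}\rangle_{L}$, $\langle\kappa_{b},\kappa_{a^{\ast}}\rangle_{L}$, each a pairing of an element of $K_{L}(\cC)$ against an element of $\cC$ (the first one after the hermiticity rewrite $\langle\fb,\kappa_{a^{\ast}}\rangle_{L}=\overline{\langle\kappa_{a^{\ast}},\fb\rangle_{L}}$), hence all vanish by the definition of $K_{L}(\cC)$. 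The main obstacle is the adjoint identity for the generators $X_{i}$; everything beyond it is routine sesquilinear bookkeeping.
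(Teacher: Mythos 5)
Your proof is correct, but it takes a genuinely different route from the paper's. The paper argues one-sidedly: after reducing to $\fa=\phi(a)$, $\fb=\phi(b)\in\cB'$, it expands $\phi(ab)=\fa\fb+\fa\kappa'+\kappa_1\phi(b)+\kappa_0$ via Lemma~\ref{lem:5} and Proposition~\ref{prop:6} and disposes of all correction terms by relation~(\ref{eq:inK1}). This is exactly the expansion you chose to avoid, and the term you single out --- $L(\kappa_1\phi(b))$ with $\kappa_1\in K_L(\cC)$ on the \emph{left} --- is indeed the delicate one: relation~(\ref{eq:inK1}) kills $L(c^{\ast}\kappa)$ and, via hermiticity of the form, also $L(\kappa^{\ast}c)=\overline{\langle\kappa,c\rangle_L}$, but $L(\kappa c)=\langle c,\kappa^{\ast}\rangle_L$ is not covered unless $K_L(\cC)$ is $\ast$-invariant, which does not follow from the hypotheses (the kernel of a hermitian form on a $\ast$-invariant space need not be $\ast$-stable; e.g.\ $L=\tmop{tr}(e_{11}\,\cdummy)$ on $M_2(\CC)$ is positive and hermitian with kernel the matrices of vanishing first column). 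Your symmetric route sidesteps this entirely: by first proving the adjoint identity $\langle X_i c,d\rangle_L=\langle c,X_{i^{\ast}}d\rangle_L$ on $\cB'$ --- essentially the computation the paper postpones to its final proposition on $\rho_{\cL}$ --- you rewrite $\cL(\fa\fb)=\langle\phi(b),\phi(a^{\ast})\rangle_L$, after which every correction term is a pairing $\langle\kappa,c\rangle_L$ or $\langle c,\kappa\rangle_L$ with $\kappa\in K_L(\cC)$, and both vanish from the definition of the kernel plus hermiticity alone. What the paper's version buys is brevity and direct reuse of Lemma~\ref{lem:5}; what yours buys is that $L$ is never evaluated on a product with a kernel element on the left, so the argument is insensitive to whether $K_L(\cC)$ is $\ast$-invariant. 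The remaining steps of your write-up (the domain checks $d^{\ast}a_ic\in\cC^2$, the extension of the adjoint identity from generators to words by multiplicativity of $p\mapsto p(X)$, and the final sesquilinear expansion) all check out.
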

\begin{proof}
  Let $\fa= \sigma ( a), \fb= \sigma ( b) \in \cC$
  with $a, b \in C$. By Proposition {\ref{prop:6}}, 
\begin{eqnarray*}
\fa&=&
  \pi ( a) + \kappa = \phi ( a) + \kappa, \\
\fb&=& \pi ( a) + \kappa' =
  \phi ( b) + \kappa'
\end{eqnarray*}
with $\kappa, \kappa' \in K_L ( \cC)$. Relation \eqref{eq:inK1} implies
  have 
$$L ( \fa\fb) = L ( ( \phi ( a) + \kappa) ( \phi ( b)
  + \kappa')) = L ( \phi ( a) \phi ( b)).
$$ 
We can therefore assume that $\fa= \sigma ( a) = \phi (
  a), \fb= \sigma ( b) = \phi ( b) \in \cB$ with $a, b \in B$
  in order to prove that $\mathcal{L} (\fa\fb) = L ( \phi (
  a b)) = L ( \fa\fb)$.
  
  By Lemma \ref{lem:5}, $\phi ( a b) =\fa \phi ( b) + \kappa_1 \phi (
  b) + \kappa_0$ with $\kappa_0, \kappa_1 \in K_L ( \cC)$. By
  Proposition $\ref{prop:6}$, $\phi ( b) = \pi ( \fb) =\fb+
  \kappa'$ with $\kappa' \in K_L ( \cC)$. We deduce that
  \[ \phi ( a b) =\fa\fb+\fa \kappa' + \kappa_1
     \phi ( b) + \kappa_0 . \]
  The relation $\left( \ref{eq:inK1} \right)$ implies that $L ( \fa
  \kappa' + \kappa_1 \phi ( b) + \kappa_0) = 0$, which proves that
  $\mathcal{L} ( \fa\fb) = L ( \phi ( a b)) = L (
  \fa\fb)$.
\end{proof}\\
As a consequence of the previous results, we deduce that $\cL$ is a
flat extension on $\cA$ with respect to $\cB$.
\begin{proposition}\label{prop:sum}
  $K_{\cL}(\cA) = \cA \cdot \cK$ and $\cA
  =\cB' \oplus K_{\cL} (\cA)$.
\end{proposition}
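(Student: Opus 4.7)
The plan is to derive both assertions from three ingredients: \textbf{(a)} $\cA \cdot \cK \subseteq K_{\cL}(\cA)$; \textbf{(b)} $\cA = \cB' + \cA \cdot \cK$; and \textbf{(c)} $\cB' \cap K_{\cL}(\cA) = \{0\}$. Once these are in hand, the chain $\cA = \cB' + \cA \cdot \cK \subseteq \cB' + K_{\cL}(\cA) \subseteq \cA$ combined with (c) gives the direct sum $\cA = \cB' \oplus K_{\cL}(\cA)$; and any $x \in K_{\cL}(\cA)$ written as $v + y$ with $v \in \cB'$ and $y \in \cA \cdot \cK \subseteq K_{\cL}(\cA)$ forces $v = x - y \in \cB' \cap K_{\cL}(\cA) = \{0\}$, so $x = y \in \cA \cdot \cK$, yielding the equality $K_{\cL}(\cA) = \cA \cdot \cK$.

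For (a), I would fix $\fa = \sigma(a) \in \cA$ and $\kappa \in \cK$, and lift $\kappa = \sigma(k)$ with $k \in C$ (possible since $\cK \subseteq K_L(\cC) \subseteq \cC$). For an arbitrary $\fb = \sigma(b) \in \cA$, the definition (\ref{eq:Lext}) of $\cL$ and the elementary identity $(pq)(X) = p(X)\, q(X)$ give
\[ \cL(\fb^{\ast}\fa\kappa) = L\bigl(\phi(b^{\ast}ak)\bigr) = L\bigl((b^{\ast}a)(X)\,\phi(k)\bigr). \]
By Proposition \ref{prop:6}, $\phi(k) = \pi(\kappa)$, and $\pi$ annihilates $K_L(\cC) \supseteq \cK$, so $\phi(k) = 0$ and hence $\cL(\fb^{\ast}\fa\kappa) = 0$ for every $\fb$.

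For (b), I would reuse the inductive statement produced in the proof of Proposition \ref{prop:13}, which gives $\phi(p) = \sigma(p) + \kappa'$ with $\kappa' \in \cA \cdot \cK$ for every $p \in \cF$. Since $\phi(p) \in \cB'$, rearranging shows $\sigma(p) \in \cB' + \cA \cdot \cK$, and surjectivity of $\sigma$ gives (b). For (c), suppose $v \in \cB' \cap K_{\cL}(\cA)$. Then $v \in \cB' \subseteq \cC$, and for any $\fb \in \cC$ we have $\fb^{\ast} v \in \cC^2$, so Proposition \ref{prop:8} yields $L(\fb^{\ast} v) = \cL(\fb^{\ast} v) = 0$. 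Hence $v \in K_L(\cC)$, and the hypothesis $\cC = \cB' \oplus K_L(\cC)$ forces $v = 0$.

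The only piece requiring modest attention is the functoriality $\phi(pq) = p(X)\phi(q)$ used in step (a), which reduces to $(pq)(X)(1) = p(X)\bigl(q(X)(1)\bigr)$ and reflects that $p \mapsto p(X)$ is an algebra homomorphism from the free algebra $\cF$ into the operators acting on $\cB'$; everything else is algebraic bookkeeping that invokes the GNS-type machinery already developed.
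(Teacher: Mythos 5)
Your proposal is correct, and it runs on the same machinery as the paper's proof (the map $\phi$, Proposition \ref{prop:6} giving $\phi(k)=\pi(\kappa)=0$ for $\kappa\in\cK$, and the inductive identity $\phi(p)=\sigma(p)+\kappa'$ with $\kappa'\in\cA\cdot\cK$ from the proof of Proposition \ref{prop:13}), but you organize the delicate inclusion $K_{\cL}(\cA)\subseteq\cA\cdot\cK$ differently. The paper proves it head-on: for $\kappa\in K_{\cL}(\cA)$ it computes $\cL(\fa\kappa)=L(\fa\,\tilde{\phi}(\kappa))$ for $\fa\in\cC$, concludes $\tilde{\phi}(\kappa)\in K_L(\cC)\cap\cB'=\{0\}$, and then invokes $\ker\tilde{\phi}=\cA\cdot\cK$ from Proposition \ref{prop:13}. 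You instead obtain it as a purely formal consequence of your three ingredients (a) $\cA\cdot\cK\subseteq K_{\cL}(\cA)$, (b) $\cA=\cB'+\cA\cdot\cK$, and (c) $\cB'\cap K_{\cL}(\cA)=\{0\}$, which is a clean linear-algebra argument requiring no further computation with $\phi$. Your proof of (c) also takes a different route: you pull $v\in\cB'\cap K_{\cL}(\cA)$ back into $K_L(\cC)$ via the compatibility statement of Proposition \ref{prop:8} and then use the direct sum $\cC=\cB'\oplus K_L(\cC)$, whereas the paper argues via $\tilde{\phi}(v)=v$ together with $\tilde{\phi}(v)=0$. Both routes are sound and rely on the same earlier propositions; yours makes the logical dependencies more transparent and is marginally more economical, while the paper's version records the identification $K_{\cL}(\cA)=\ker\tilde{\phi}$ explicitly along the way, which is convenient in the subsequent uniqueness and positivity arguments (though it remains available to you through Proposition \ref{prop:13}).
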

\begin{proof}
  Let $\kappa \in \cK$. $\forall \fa \in \cA$, there
  exists $a \in \cF$, such that $\sigma (a) =\fa$. We deduce
  that
  \[ \cL ( \fa \kappa) = L ( \tilde{\phi} (\fa^{}
     \kappa)) = L ( a (X) \tilde{\phi} (\kappa)) = 0 \]
  since $\tilde{\phi} (\kappa) = \pi ( \kappa) =$0 by Proposition \ref{prop:6}.
  We deduce that $\kappa \in K_{\cL} (\cA)$. As $K_{\cL} (\cA)$ is a
  left ideal, we have $\cA \cdot \cK \subset K_{\cL} (\cA)$.
  
  Conversely, let $\kappa \in K_{\cL} (\cA)$. Then $\forall
  \fa \in \mathcal{\cC}$, \ there exists $a \in \cC$,
  such that $\sigma (a) =\fa$. By proposition \ref{prop:6},
  $\tilde{\phi} \circ \tilde{\phi} ( \kappa) = \pi ( \tilde{\phi} ( \kappa)) =
  \tilde{\phi} ( \kappa)$ and by Proposition \ref{prop:8}, we have
  \[ \mathcal{L} (\fa \kappa) = L ( a ( X)  \tilde{\phi} ( \kappa)) = L
     ( \tilde{\phi} ( \fa \tilde{\phi} ( \kappa))) =\cL (\fa \phi (\kappa)) = L (\fa \phi (\kappa)) = 0. \]
  This implies that $\phi (\kappa) \in K_L ( \cC) \cap \cB' =
  \{0\}$. We deduce that $\kappa \in \ker \tilde{\phi} = {\cA
  \cdot \cK}$ (by Proposition \ref{prop:13}). This proves
  the reverse inclusion $K_{\cL} (\cA) \subset \cA \cdot \cK$.
  
  Let $\fa \in \cA$. It can be decomposed as
  \[ \fa= \tilde{\phi} (\fa) +\fa- \tilde{\phi}
     (\fa), \]
  with $\tilde{\phi} (\fa) \in \cB'$ and $\fa-
  \tilde{\phi} (\fa) \in \ker \tilde{\phi} = \cA \cdot
  \cK= K_{\cL} (\cA)$. This shows that $\cA
  =\cB' + K_{\cL} (\cA)$.
  
  To prove that the sum is direct, we consider an element $\fb \in
  \cB' \cap K_{\cL} (\cA)$. Then $\tilde{\phi} (\fb)
  =\fb$ by Proposition \ref{prop:6} and $\tilde{\phi} (\fb) =
  0$ since $\fb \in K_{\cL} (\cA) = \cA \cdot
  \cK= \ker \tilde{\phi}$. We deduce that $\fb= 0$ and
  therefore that $\cA = B \oplus K_{\cL} (\cA)$.
\end{proof}\\
The next proposition shows the uniqueness of the extension.
\begin{proposition}
There exists a unique hermitian linear form $\cL\in \cA^{\ast}$ which extends $L$ and is a flat
extension with respect to $\cB$.
\end{proposition}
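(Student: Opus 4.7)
The plan is to show that any hermitian flat extension $\tilde{L}$ of $L$ with respect to $\cB$ must coincide with the functional $\cL$ constructed above, by exploiting the intrinsic algebraic decomposition $\cA = \cB' \oplus \cA\cdot\cK$ established in Proposition \ref{prop:sum}, which depends only on $L$ and on the choice of $\cB'$, not on any extension. Existence is already provided by Propositions \ref{prop:8} and \ref{prop:sum}.

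The first step is to prove that $\cK \subseteq K_{\tilde{L}}(\cA)$, and therefore $\cA\cdot\cK \subseteq K_{\tilde{L}}(\cA)$ by the left-ideal property of $K_{\tilde{L}}(\cA)$. Fix $\kappa\in\cK$ and $a\in\cA$. Using the flatness $\cA = \cB + K_{\tilde{L}}(\cA)$, write $a = b + k$ with $b\in\cB$ and $k\in K_{\tilde{L}}(\cA)$, so that $\tilde{L}(a^{*}\kappa) = \tilde{L}(b^{*}\kappa) + \tilde{L}(k^{*}\kappa)$. For the first summand, observe that $b^{*}\kappa \in \cC^{2}$ since $\cB,\cK\subseteq\cC$ and $\cC$ is $\ast$-invariant; hence $\tilde{L}(b^{*}\kappa) = L(b^{*}\kappa) = \langle\kappa,b\rangle_{L} = 0$ because $\kappa\in K_{L}(\cC)$ and $b\in\cB\subseteq\cC$. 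For the second summand, $\tilde{L}(k^{*}\kappa) = \langle\kappa,k\rangle_{\tilde{L}} = 0$ by hermiticity of the form together with $k\in K_{\tilde{L}}(\cA)$. Thus $\kappa\in K_{\tilde{L}}(\cA)$.

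The second step pins down $\tilde{L}$ via Proposition \ref{prop:sum}. For arbitrary $a\in\cA$, decompose $a = b' + j$ with $b'\in\cB'$ and $j\in\cA\cdot\cK$. Since $j\in\cA\cdot\cK \subseteq K_{\tilde{L}}(\cA)$ by the first step, $\tilde{L}(j) = \langle j,1\rangle_{\tilde{L}} = 0$. Since $b'\in\cB'\subseteq\cB\subseteq\cC$ and $1\in\cC$, we have $b' = b'\cdot 1 \in \cC^{2}$, so $\tilde{L}(b') = L(b')$ because $\tilde{L}$ extends $L$ on $\cC^{2}$. Hence $\tilde{L}(a) = L(b')$, a value that depends only on $a$ through the fixed decomposition $\cA = \cB' \oplus \cA\cdot\cK$, and not on the particular flat extension. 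Since $\cL$ is itself such an extension, the same formula yields $\cL(a) = L(b')$, and so $\tilde{L} = \cL$.

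The main obstacle is the first step: once the containment $\cA\cdot\cK \subseteq K_{\tilde{L}}(\cA)$ is available for every hermitian flat extension $\tilde{L}$, the intrinsic decomposition of Proposition \ref{prop:sum} mechanically forces $\tilde{L}$ to be determined by $L$. Without this bridge between the a priori extension-dependent kernel $K_{\tilde{L}}(\cA)$ and the extension-free left ideal $\cA\cdot\cK$, two distinct flat extensions could not be compared.
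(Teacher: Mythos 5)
Your proof is correct and follows essentially the same route as the paper: first showing that $\cA\cdot\cK\subseteq K_{\tilde L}(\cA)$ for any hermitian flat extension $\tilde L$ by splitting $a=b+k$ with $b\in\cB$ and $k$ in the kernel, and then using the decomposition $\cA=\cB'\oplus\cA\cdot\cK$ of Proposition \ref{prop:sum} to force $\tilde L(a)=L(b')$. The only cosmetic difference is that you phrase the second kernel computation via hermiticity of the sesquilinear form where the paper uses $K_{\cL}(\cA)^{\ast}=K_{\cL}(\cA)$ and complex conjugation; the content is identical.
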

\begin{proof} 
If $\cL$ extends $L$ and is a flat extension with respect to $\cB$
then $\cA =\cB + K_{\cL} (\cA)$.
As $\cL$ is hermitian, $K_{\cL} (\cA)^{\ast}=K_{\cL} (\cA)$.
Then $\forall \kappa \in \cK, \forall
\fa \in \cA$, $\fa=\fb + \kappa'$ with $\kappa'\in K_{\cL} (\cA)$ and 
$$ 
\cL (\fa \, \kappa) = \cL (\fb \, \kappa) + \cL (\kappa' \, \kappa) 
= \cL (\fb \, \kappa) + \overline{\cL (\kappa^{\ast} \, \kappa'^{\ast})} =
L (\fb \, \kappa) = 0,
$$
since $\kappa'^{\ast}\in K_{\cL} (\cA)$, $\fb\, \kappa \in \cC^{2}$
and $\kappa \in K_{L} (\cC)$. 
This proves that $\cA \cdot \cK \subset  K_{\cL} (\cA)$.

For any $\fa \in \cA$, let $\fb=\tilde{\phi} (\fa)\in \cB$ so that $\fa-\fb
\in \cA \cdot \cK \subset  K_{\cL} (\cA)$. 
This implies that
$$ 
\cL (\fa) = \cL (\fb) = L (\fb)= L (\tilde{\phi} (\fa)).
$$
This shows that $\cL$ coincides with the linear form defined in
\eqref{eq:Lext} and thus that the extension $\cL$ is unique.
\end{proof}\\
Finally, we show that the positivity of $L$ can also be extended to $\cL$.
\begin{proposition}
  If $\forall \fa \in \cC, L (\fa^{\ast}
  \fa) {\geqslant} 0$, then $\forall \fa \in
  \cA, \mathcal{L} (\fa^{\ast} \fa) \geqslant 0$.
\end{proposition}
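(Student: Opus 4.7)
The plan is to exploit the direct sum decomposition $\cA = \cB' \oplus K_{\cL}(\cA)$ already established in Proposition \ref{prop:sum}, together with the hermiticity of $\cL$ proved in the uniqueness proposition. Given an arbitrary $\fa \in \cA$, I would decompose $\fa = \fb + \kappa$ with $\fb = \tilde{\phi}(\fa) \in \cB'$ and $\kappa \in K_{\cL}(\cA)$, and then expand
$$\cL(\fa^{\ast}\fa) = \cL(\fb^{\ast}\fb) + \cL(\fb^{\ast}\kappa) + \cL(\kappa^{\ast}\fb) + \cL(\kappa^{\ast}\kappa).$$

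Next I would argue that only the first term survives. The terms $\cL(\fb^{\ast}\kappa)$ and $\cL(\kappa^{\ast}\kappa)$ vanish directly because they equal $\langle \kappa, \fb \rangle_{\cL}$ and $\langle \kappa, \kappa \rangle_{\cL}$ respectively, and $\kappa \in K_{\cL}(\cA)$. The mixed term $\cL(\kappa^{\ast}\fb)$ vanishes as well: by hermiticity of $\cL$, it equals $\overline{\cL(\fb^{\ast}\kappa)} = 0$. So $\cL(\fa^{\ast}\fa) = \cL(\fb^{\ast}\fb)$.

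Finally, I would invoke Proposition \ref{prop:8} to replace $\cL$ by $L$ on this remaining piece. Since $\fb \in \cB' \subseteq \cC$ and $\cC$ is $\ast$-invariant, $\fb^{\ast}\fb \in \cC^2$, so $\cL(\fb^{\ast}\fb) = L(\fb^{\ast}\fb)$. By the assumed positivity of $L$ on $\cC^2$, this last quantity is nonnegative, yielding $\cL(\fa^{\ast}\fa) \geq 0$.

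I do not expect a real obstacle here, since all the structural work has already been done in building $\cL$ and proving $\cA = \cB' \oplus K_{\cL}(\cA)$; positivity falls out by elementary manipulations. The only point requiring a moment's care is the vanishing of $\cL(\kappa^{\ast}\fb)$, where one must use hermiticity rather than a direct appeal to $K_{\cL}(\cA)$ being a left (not a priori two-sided) ideal. As a bonus, the same computation shows that $\langle \fb, \fb \rangle_{\cL} = L(\fb^{\ast}\fb) \geq 0$ on $\cB'$, and the definiteness follows from $K_L(\cC) \cap \cB' = \{0\}$, so $\langle \cdot, \cdot \rangle_{\cL}$ is a scalar product on $\cB'$ as claimed in the theorem.
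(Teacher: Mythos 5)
Your proposal is correct and follows essentially the same route as the paper's own proof: both decompose $\fa=\tilde{\phi}(\fa)+\kappa$ with $\kappa\in K_{\cL}(\cA)=\cA\cdot\cK$, annihilate the two terms ending in $\kappa$ directly from the definition of $K_{\cL}(\cA)$, handle the remaining mixed term $\cL(\kappa^{\ast}\fb)$ via hermiticity of $\cL$ as $\overline{\cL(\fb^{\ast}\kappa)}=0$, and conclude with Proposition \ref{prop:8} and the positivity of $L$ on $\cC^2$. No substantive difference.
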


\begin{proof}
  $\forall \fa \in \cA$, let $\fb= \tilde{\phi}
  (\fa)$ so that $\fa-\fb \in \ker \tilde{\phi} =
  \cA \cdot \cK= K_{\cL} (\cA)$. As $\tilde{\phi}
  (\fa^{\ast}) = \tilde{\phi} (\fa)^{\ast}$ and $\forall
  \fb \in \cA$, $\tilde{\phi} (\fa\fb)^{\ast}
  = \tilde{\phi} (\fb^{\ast} \fa^{\ast})$, we deduce from
  Proposition \ref{prop:8} that
  \begin{eqnarray*}
    \mathcal{L} (\fa^{\ast} \fa) & = & \mathcal{L}
    ((\fb^{\ast} + (\fa^{\ast} -\fb^{\ast}))
    (\fb+ (\fa-\fb))) =\mathcal{L}
    (\fb^{\ast} \fb) +\mathcal{L} ((\fa^{\ast}
    -\fb^{\ast})\fb)\\
    & = & \mathcal{L} (\fb^{\ast} \fb) +
    \overline{\mathcal{L} (\fb^{\ast} (\fa-\fb))}
    =\mathcal{L} (\fb^{\ast} \fb) = L (\fb^{\ast}
    \fb).
  \end{eqnarray*}
  As $L$ is positive, $\mathcal{L} (\fa^{\ast} \fa) = L
  (\fb^{\ast} \fb) \geqslant 0$ so that {$\mathcal{L}$} is also positive.
\end{proof}

\subsection*{Representation of $\cA$}
We can now construct the representation of $\cA$ on the vector space
$\cB'$, using the
decomposition of Proposition \ref{prop:sum}.
Let denote by $\pi_{\cL}$ the projection operator of $\cA$ on $\cB'$
along $K_{\cL} (\cA)$. By definition of $K_{\cL} (\cA)$,
for all $\fa,\fb\in \cA$, we have $\langle \pi_\cL(\fa) ,\pi_\cL( \fb) \rangle_{\cL}=\cL (\fb^{*}\fa)$. 
For $\fa\in \cA$, 
let $\rho_\cL(\fa) $ denote the linear operator on $\cB'$ defined by 
\begin{align*}
\rho_\cL(\fa)\fb=\pi_\cL(\fa\fb),\quad{\rm where}\quad \fa\in \cA,~\fb\in \cB'.
\end{align*}
%Then we define the representation of $\cA$ on $\cB'$ by
%\begin{align}\label{defpisubl}
%\rho_{\cL}: \cA &\rightarrow\cB',~~~
%  a \mapsto \pi_{\cL} \circ \rho_{\fa}.
%\end{align}
\begin{proposition} $\rho_{\cL}$ is the $*$-representation of $\cA$ on
  $(\cB', \langle \cdot , \cdot \rangle_{\cL})$ which is associated withthe functional $\cL.$
\end{proposition}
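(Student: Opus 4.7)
The plan is to verify, in turn, each of the three conditions in Definition \ref{defrep} for $\rho_\cL$ acting on $(\cB',\langle\cdot,\cdot\rangle_\cL)$, and then to check the defining identity $\cL(\fa)=\langle\rho_\cL(\fa)1,1\rangle_\cL$, which is the precise meaning of ``associated with the functional $\cL$''.

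For the unit, $\rho_\cL(1)\fb=\pi_\cL(\fb)=\fb$ for every $\fb\in\cB'$, since $\pi_\cL$ restricts to the identity on its image $\cB'$. Linearity of $\fa\mapsto\rho_\cL(\fa)$ needs no argument, as $\pi_\cL$ and left multiplication are linear.

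The algebraic heart is multiplicativity, $\rho_\cL(\fa_1\fa_2)=\rho_\cL(\fa_1)\rho_\cL(\fa_2)$. The crucial input is that $K_\cL(\cA)$ is a \emph{left} ideal of $\cA$, which follows from Proposition \ref{prop:sum} (or from the remark just after Definition \ref{defrep}). For $\fa_1,\fa_2\in\cA$ and $\fb\in\cB'$, I would write $\fa_2\fb=\pi_\cL(\fa_2\fb)+\kappa$ with $\kappa\in K_\cL(\cA)$; left-multiplying by $\fa_1$ keeps the remainder in $K_\cL(\cA)$, so applying $\pi_\cL$ yields $\pi_\cL(\fa_1\fa_2\fb)=\pi_\cL(\fa_1\pi_\cL(\fa_2\fb))$, i.e.\ the desired identity.

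For the adjoint condition, I would use the formula $\langle\pi_\cL(\fa),\pi_\cL(\fb)\rangle_\cL=\cL(\fb^\ast\fa)$ recalled just before the proposition: for $\fa\in\cA$ and $\fb,\fc\in\cB'$, both $\langle\rho_\cL(\fa)\fb,\fc\rangle_\cL$ and $\langle\fb,\rho_\cL(\fa^\ast)\fc\rangle_\cL$ collapse to $\cL(\fc^\ast\fa\fb)$ in one line. Finally, since $1\in\cB'$, $\rho_\cL(\fa)1=\pi_\cL(\fa)$, so $\langle\rho_\cL(\fa)1,1\rangle_\cL=\cL(1^\ast\fa)=\cL(\fa)$. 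The only step requiring a genuine idea is multiplicativity, and even there the work has been done beforehand in proving that $K_\cL(\cA)$ is a left ideal; the remaining verifications are routine bookkeeping.
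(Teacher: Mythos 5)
Your proposal is correct and follows essentially the same route as the paper: multiplicativity via the decomposition $\fa_2\fb=\pi_\cL(\fa_2\fb)+\kappa$ with $\kappa\in K_\cL(\cA)=\cA\cdot\cK$ a left ideal (Proposition \ref{prop:sum}), the adjoint condition via the identity $\langle\pi_\cL(\fa),\pi_\cL(\fb)\rangle_\cL=\cL(\fb^\ast\fa)$, and the final check $\cL(\fa)=\langle\rho_\cL(\fa)1,1\rangle_\cL$. Your adjoint verification is marginally more direct (both sides collapse to $\cL(\fc^\ast\fa\fb)$ at once, where the paper routes through complex conjugation and hermitianity of $\cL$), but this is a cosmetic difference, not a different argument.
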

\begin{proof}
By construction, $\forall \fa \in \cA$, $\rho_{\cL} (\fa)$ is a
linear operator of $\cB'$. 
We have to prove first that $\forall \fa,\fb\in \cA$, $\forall \fc \in
\cB'$, $\rho_{\cL} (\fa \fb) (\fc) =
\rho_{\cL} (\fa)\circ \rho_{\cL} (\fb) (\fc)$:
\begin{eqnarray*}
\rho_{\cL} (\fa \fb) (\fc)&=& \pi_{\cL} (\fa \fb \fc) =
\pi_{\cL} (\fa\, \pi_{\cL} (\fb \fc)) + \pi_{\cL} (\fa\, \kappa)\ \ \  (\kappa
\in \cA\cdot \cK)\\
&=& \pi_{\cL} (\fa \, \pi_{\cL} (\fb \fc)) = \rho_{\cL} (\fa) \circ \rho_{\cL}(\fb) (\fc)
\end{eqnarray*}
since $\fa\, \kappa\in \cA\cdot \cK= \ker \pi_{\cL}$ (Proposition \ref{prop:sum}).

We have to prove secondly that $\forall \fa \in \cA$, $\forall \fb, \fc \in
\cB'$,
$ \langle \rho_{\cL} (\fa) \fb, \fc \rangle_{\cL} = { \langle \fb,
\rho_{\cL} ( \fa^{\ast}) \fc \rangle_{\cL}}$:
 \begin{eqnarray*}
\langle \rho_{\cL} (\fa) \fb, \fc \rangle_{\cL} &=& \cL (\fc^{\ast}\pi_{\cL}
(\fa \fb))\\
&=& \cL(\fc^{\ast} \fa \fb) + \cL (\fc^{\ast}\, \kappa) =  \cL(\fc^{\ast} \fa \fb)
\ \ \ \ \ \ \ \ \ \ \ \ \,(\kappa \in K_{\cL} (\cA))\\
&=& \overline{\cL( \fb^{\ast} \fa^{\ast}\fc)} =
\overline{\cL( \fb^{\ast} \pi_{\cL}(\fa^{\ast}\fc))} + 
\overline{\cL( \fb^{\ast} \kappa')} 
\ \ \  (\kappa' \in K_{\cL} (\cA))\\
&=&  \overline{ \cL ( \fb^{*}
\rho_{\cL} ( \fa^{\ast}) \fc )}= \overline{ \langle \rho_{\cL} (
\fa^{\ast}) \fc,  \fb \rangle_{\cL}}= 
{ \langle  \fb,  \rho_{\cL} (\fa^{\ast}) \fc\rangle_{\cL}}.
 \end{eqnarray*} 
This concludes the proof that $\rho_{\cL}$ is a $*$-representation of
$\cA$ on $\cB'$. Since $\cL(\fa)= \langle \rho_{\cL} (\fa) 1,
1\rangle_{\cL} $ for all $\fa\in \cA$, $\rho_\cL$ is the $*$-representation associated with $\cL$.
\end{proof}

\section{Applications}

Let $\pi$ be a $\ast$-representation of $\cA$ on a unitary space $(V,
\langle \cdot, \cdot \rangle)$. For any $v \in V$, the linear functional $L_v$
on $A$ defined by $L_v (\cdot) = \langle \pi (\cdot) v, v \rangle$ is positive and $\cK_v \assign \{a \in \cA : \pi (a) v = 0\}$
is a two-sided $\ast$-ideal of $\cA$. 
Let $\cB$ and $\cC$ be $\ast$-invariant linear subspaces of
$\cA$ such that $1 \in \cB$ and $\cB \subseteq
\cC$. Clearly, $\cK_L ( \cC) = \cK_v \cap \cC$. Then the restriction of $L_v$ to $\cC^2$ is a flat
extension with respect to $\cB$ if and only if $\cC =
\cB + \cK_v \cap C$.

Suppose in addition that $V_v:=\pi(\cA)v$ is finite dimensional. Then $\cA /
\cK_v$ is finite dimensional, so we can choose a finite dimensional
$\ast$-invariant subspace $\cB$ containing $1$ such that $\cA
= \cB + \cK_v$. Then, for any $\ast$-invariant subspace
$\cC$ of $\cA$ which contains $\cB$, the restiction of
$L_v$ to $\cC^2$ is a flat extension with respect to $\cB$.
This provides a large class of examples of flat extensions.

We now develop the three 
applications of Theorem \ref{thm:main} mentioned in the Introduction.

\subsection{Truncated moment problem on cylinder sets}

Let $\cA$ be the polynomial $*$-algebra $\CC[x_1,\dots,x_d,y]$ in
$d+1$ hermitean variables $x_1,\dots,,x_d,y$. 
The algebra $\cA$ is the quotient of the free $*$-algebra
$\cF=\CC\langle x_1,\dots,x_d,y\rangle$ by the commutation relations
$x_{i} x_{j}-x_{j} x_{i}=0$
$x_{i} y-y x_{i}=0$, so that $\delta (\cA)=2$.
For  $k\in\NN$, let $\cA_k$ be the linear span of $x^\alpha \CC[y]$, where $\alpha\in \NN_0^d$,  $|\alpha|\leq k$. 
\begin{proposition}\label{truncatedcylinder} Let $m\in \NN$,  $\cB=\cA_m$ and $\cC=\cA_{m+1}$. Suppose  that $L$ is a positive linear functional on $\cC^2$ which is a flat extension with respect to $\cB$. Then there exist finitely  points $t_1,\dots,t_k\in \RR^d$, where $k\leq {d+1+m \choose m}$, and a positive Borel measure $\mu$ on $\RR^{d+1}$ supported by the set $\cup_{j=1}^k t_j\times \RR$ such that 
$$L(p)=\int_{\RR^{d}\times \RR} p(x,y)\, d\mu(x,y)\quad {\rm for}\quad  p\in \cC^2=\cA_{2m+2}.$$
\end{proposition}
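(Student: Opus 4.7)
The plan is to apply Theorem \ref{thm:main} to extend $L$ to a positive functional $\cL$ on $\cA$, and then use the resulting $*$-representation together with the commutative-tensor structure of $\cA=\CC[x_1,\ldots,x_d]\otimes\CC[y]$ to construct the cylinder-supported representing measure.

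To apply the theorem, I would first verify its hypotheses. The defining relations of $\cA$, namely $[x_i,x_j]=0$ and $[x_i,y]=0$, have index $2$, so $\delta(\cA)=2$; taking the theorem's index parameter equal to $1$ gives $2\cdot 1\geq\delta(\cA)$, and the inclusion $\cB^{[1]}=\cA_m^{+}\subseteq\cA_{m+1}=\cC$ is immediate because every generator raises the $x$-degree by at most $1$. The theorem then yields a unique positive extension $\cL$ of $L$ on $\cA$, a decomposition $\cA=\cB'\oplus K_{\cL}(\cA)$, and a $*$-representation $\rho_{\cL}$ of $\cA$ on the pre-Hilbert space $(\cB',\langle\cdot,\cdot\rangle_{\cL})$. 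Setting $X_i=\rho_{\cL}(x_i)$ and $Y=\rho_{\cL}(y)$ gives pairwise commuting symmetric operators, since $\cA$ is commutative and the generators are hermitean.

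The next step is to isolate the finitely many $x$-atoms. I would use that $K_L(\cC)$ is a $\CC[y]$-submodule of $\cC$ (because $y\cdot\cC\subseteq\cC$): combined with the flatness $\cC=\cB+K_L(\cC)$, this shows that $\cB'\cong\cB/(\cB\cap K_L(\cC))$ is a finitely generated $\CC[y]$-module, generated by the classes of $\{x^\alpha:|\alpha|\leq m\}$, and each $X_i$ is $\CC[y]$-linear on $\cB'$. The plan is then to reduce to the classical Curto--Fialkow theorem by establishing that the restriction $\cL|_{\CC[x_1,\ldots,x_d]}$ is a Curto--Fialkow flat extension, i.e.\ that the ranks of its $x$-only moment matrices at levels $m$ and $m+1$ coincide. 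Applying Curto--Fialkow then produces $k$ distinct points $t_1,\ldots,t_k\in\RR^d$ with $k$ bounded by the stated binomial coefficient; these are precisely the joint eigenvalues of $(X_1,\ldots,X_d)$.

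Finally, for each $j$, I would consider the joint eigenspace $W_j=\{v\in\cB':X_iv=t_{j,i}v\text{ for all }i\}$, which is $Y$-invariant since $Y$ commutes with every $X_i$. The restriction of $Y$ to the Hilbert-space completion $\overline{W_j}$ is essentially self-adjoint, so the spectral theorem supplies a positive Borel measure $\mu_j$ on $\RR$ encoding the action of $Y$ on $W_j$. Assembling, $\mu=\sum_{j=1}^{k}\delta_{t_j}\otimes\mu_j$ is a positive Borel measure on $\RR^{d+1}$ supported on $\bigcup_j\{t_j\}\times\RR$, and the identity $L(p)=\int p\, d\mu$ for $p\in\cC^2$ follows from the joint spectral decomposition together with the cyclic-vector identity $\cL(p)=\langle\rho_{\cL}(p)\mathbf{1},\mathbf{1}\rangle_{\cL}$. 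The main obstacle is the Curto--Fialkow reduction: the $\cA$-flat decomposition produces representatives in $\cB=\cA_m$ whose coefficients may depend on $y$, so extracting the classical $y$-independent flatness of the $x$-moment matrix is delicate and requires combining the positivity of $L$ with the $\CC[y]$-module structure of $\cB'$.
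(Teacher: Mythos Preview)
Your opening moves---applying Theorem~\ref{thm:main}, then restricting to $\CC[x_1,\dots,x_d]$ and invoking Curto--Fialkow to extract the atoms $t_1,\dots,t_k$---match the paper exactly. The divergence comes afterwards, and there your argument has genuine gaps.

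The paper does \emph{not} build the measure by spectrally decomposing the GNS space. Instead, once the atoms are known, it uses Cauchy--Schwarz to show that $\tilde L$ annihilates the ideal generated by $q(x)=\prod_{j}\|x-t_j\|^2$, observes that the associated preorder is that of the cylinder $K=\bigcup_j\{t_j\}\times\RR$, and then invokes a known moment-problem result (cylinders with compact base have the strong moment property, \cite{Sch2003}, see also \cite{KM}) to obtain the representing measure $\mu$ on $K$. This bypasses all operator-theoretic subtleties.

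Your spectral route, by contrast, needs two things you do not establish. First, the decomposition of the (infinite-dimensional) completion of $\cB'$ into joint $X_i$-eigenspaces is not automatic: commuting \emph{symmetric} operators on a pre-Hilbert space need not be bounded, let alone jointly diagonalizable. To get this you would first have to prove that $q(X)=0$ as an operator on $\cB'$---which is precisely the Cauchy--Schwarz step the paper performs---and then deduce boundedness and finite joint spectrum. Second, your claim that $Y|_{\overline{W_j}}$ is essentially self-adjoint is in general false: it is equivalent to determinacy of a one-variable Hamburger moment problem, which can fail. You could repair this by choosing an arbitrary self-adjoint extension of $Y$ on each $\overline{W_j}$ and taking its spectral measure, but you would then owe a verification that the assembled measure actually represents $\tilde L$ (not merely some positive extension of it). None of this is impossible, but it is substantially more work than you indicate, and the paper's real-algebraic shortcut via the cylinder SMP is both shorter and avoids these analytic issues entirely.
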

\begin{proof}
Since $\delta (\cA)=2$ and  $\cB^{[1]}=\cA_m^{[1]}=\cA_{m+1}=\cC$, the assumptions of Theorem \ref{thm:main} are fulfilled. Hence $L$ has an extension to a  positive linear functional  $\tilde{L}$ on $\cA$ which is
  a flat extension with respect to $\cB$. Let $L_0$ and $\tilde{L}_0$ denote the restrictions of $L$ and $\tilde{L}$, respectively, to the $*$-subalgebra $\CC[x_1,\ldots,x_d]$ of $\cA$. Then $\tilde{L}_0$ is a flat extension of $L_0$ with respect to $\cB\cap\CC[x_1,\ldots,x_d]$. Therefore, by the theorem of Curto and Fialkow \cite{CF96}, $\tilde{L}_0$ is given by a $k$-atomic measure $\mu_0$ on $\RR^d$, where $k\leq {d+1+m \choose m}$. Let $t_1,\dots,t_k$ denote the atoms of $\mu_0$. For $f\in \cA$, it follows from  the Cauchy-Schwarz inequality that
  \begin{align*}
\big|\tilde{L}\big( \|x-t_1\|^2\cdots \|x-t_k\|^2f\big)\big|^2&\leq \tilde{L}\big(\|x-t_1\|^4\cdots\|x-t_k\|^4\big)\tilde{L}(f^2)\\&= \tilde{L}_0\big( \|x-t_1\|^4\cdots\|x-t_k\|^4\big)\tilde{L}(f^2)= 0.
\end{align*}
Hence $\tilde{L}$ vanishes on the ideal $\cJ$ generated by the polynomial $ \|x-t_1\|^2\cdots\|x-t_k\|^2$.
The zero set of the ideal $\cJ$ is the  set $\{t_1,\dots,t_k\}$. Hence (by \cite[Note 2.1.8]{M}) $\cJ+\sum \cA^2$ is the preorder of the semi-algebraic set $K:=\cup_{j=1}^k t_j\times \RR$ in $\RR^{d+1}$. Since $K$ is a cylinder with finite, hence compact, base set $\cup_{j=1}^k t_j$ in $\RR^d$, it follows from  Corollary 10 in \cite{Sch2003} (see also \cite{KM}) that  $K$ has property (SMP), that is, $\tilde{L}$ is given by a positive Borel measure $\mu$ supported by $K$.
\end{proof}

\subsection{Truncated moment problem for matrices of polynomials}
Let $\cA$ be the unital $*$-algebra of $n\times n$ matrices with
entries from $\CC[x_1,\dots,x_d]$.  Let $e_{ij}$ denote the
corresponding matrix units. We have $e_{ij}^*=e_{ji}$. 
The algebra $\cA$ is the quotient of the free unital $*$-algebra
$\cF=\CC\langle x_{1}, \ldots, x_{d}, e_{11}, \ldots, e_{nn}\rangle$
by the ideal generated by the commutation relations between the variables $x_{i}$, 
the commutation relations $x_{i} e_{jk}-e_{jk} x_{i}=0$ and the 
product relations $e_{ij}e_{jl}=e_{il}$, $e_{ij}e_{kl}=0$ if $k\neq l$.
This implies that $\delta (\cA)=2$.

First we prove a simple well-known lemma.
\begin{lemma}\label{irrerepmatrix}
Let $\rho$ be an irreducible $*$-representation of $\cA$ on a finite dimensional unitary space $V$. Then there exist a unitary operator $U$ of $\CC^d$ onto $V$ and a point $t_0\in \RR^d$ such that $\rho((p_{jk}))=U((p_{jk}(t_0))U^{-1}$ for all matrices $(p_{jk})\in \cA$.
\end{lemma}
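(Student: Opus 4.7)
The plan is to exploit the fact that $\cA$ contains the matrix algebra $M_n(\CC)$ (spanned by the $e_{jk}$) as a $*$-subalgebra, and that the polynomial variables $x_1, \ldots, x_d$ sit in the center of $\cA$. Once we decompose $V$ under the action of $M_n(\CC)$ and then simultaneously diagonalize the commuting self-adjoint images of the $x_i$, irreducibility will force both $V \cong \CC^n$ and the $\rho(x_i)$ to be scalar.

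First, I would observe that the operators $E_{jk} := \rho(e_{jk})$ form a system of matrix units on $V$: they satisfy $E_{jk}^* = E_{kj}$, $E_{ij}E_{jk} = E_{ik}$, $E_{ij}E_{kl} = 0$ for $j \neq k$, and $\sum_j E_{jj} = \mathrm{id}_V$. Standard $*$-representation theory of $M_n(\CC)$ then yields a unitary identification $V \cong \CC^n \otimes W$ under which $\rho(e_{jk}) = E_{jk}^{\text{std}} \otimes \mathrm{id}_W$, where $W$ is a finite dimensional unitary space (concretely, one can take $W = E_{11}V$ and $U_0 : \CC^n \otimes W \to V$ given by $e_j \otimes w \mapsto E_{j1}w$).

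Next, each $x_i$ is hermitean and commutes with every $e_{jk}$ in $\cA$, so $\rho(x_i)$ is self-adjoint and lies in the commutant of $\{E_{jk}^{\text{std}} \otimes \mathrm{id}_W\}$ in $B(V)$. That commutant equals $\mathrm{id}_{\CC^n} \otimes B(W)$, hence $\rho(x_i) = \mathrm{id}_{\CC^n} \otimes T_i$ with $T_i = T_i^* \in B(W)$. The relations $x_i x_j = x_j x_i$ give $T_i T_j = T_j T_i$, so by the spectral theorem for commuting self-adjoint operators on a finite dimensional space, $W$ decomposes as an orthogonal direct sum $W = \bigoplus_{t \in \Sigma} W_t$ of joint eigenspaces, where $\Sigma \subset \RR^d$ and $T_i|_{W_t} = t_i \cdot \mathrm{id}_{W_t}$.

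Each subspace $\CC^n \otimes W_t$ is then invariant under $\rho(\cA)$, since it is invariant under both the matrix units and the $\rho(x_i)$. Irreducibility of $\rho$ therefore forces $|\Sigma| = 1$, say $\Sigma = \{t_0\}$, and $\dim W_{t_0} = 1$ (otherwise $\CC^n \otimes W'$ would be a proper invariant subspace for any proper subspace $W' \subset W_{t_0}$). Fixing a unit vector $w_0 \in W_{t_0}$, the map $U : \CC^n \to V$, $e_j \mapsto E_{j1}w_0$, is unitary and satisfies $\rho(e_{jk}) = U E_{jk}^{\text{std}} U^{-1}$ and $\rho(x_i) = (t_0)_i \, \mathrm{id}_V$. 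By linearity and multiplicativity, $\rho((p_{jk})) = U (p_{jk}(t_0)) U^{-1}$ for every $(p_{jk}) \in \cA$, which is the claim. The only subtlety is the identification $V \cong \CC^n \otimes W$ for the restricted action of $M_n(\CC)$; after that everything reduces to the finite dimensional spectral theorem and Schur-style irreducibility arguments.
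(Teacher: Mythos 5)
Your proof is correct, but it assembles the two key ingredients in the opposite order from the paper, and the difference is worth noting. The paper first applies Schur's lemma to the \emph{full} irreducible representation: since each $pE$ with $p\in\CC[x_1,\dots,x_d]$ is central in $\cA$, the operator $\rho(pE)$ lies in the commutant of $\rho(\cA)$ and is therefore a scalar $\chi(p)I$; the map $p\mapsto\chi(p)$ is a $*$-character of $\CC[x_1,\dots,x_d]$, hence evaluation at a real point $t_0$. Only then does the paper observe that $\rho(\cA)$ and $\rho(M_n(\CC))$ have the same commutant, so the restriction to the matrix units is itself irreducible and unitarily equivalent to the identity representation. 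You instead decompose $V$ under the matrix-unit subalgebra first, obtaining $V\cong\CC^n\otimes W$ without yet invoking irreducibility of $\rho$, then place the $\rho(x_i)$ in the commutant $\mathrm{id}\otimes B(W)$, and finally use joint diagonalization of the commuting self-adjoint $T_i$ together with irreducibility to force $\dim W=1$ and a single joint eigenvalue $t_0$. Each route buys something: the paper's argument is shorter and gets $t_0$ immediately from the character, at the cost of quietly using that a hermitian character of $\CC[x_1,\dots,x_d]$ is evaluation at a \emph{real} point; your argument is more hands-on and makes the realness of $t_0$ transparent (eigenvalues of self-adjoint operators), and it also exhibits the general multiplicity space $W$ before irreducibility collapses it, which is the structure one would keep for reducible representations. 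All the individual steps you give check out, including the unitarity of $e_j\otimes w\mapsto E_{j1}w$ and the invariance of $\CC^n\otimes W_t$ under the generators; as a minor point, you correctly write $\CC^n$ where the paper's statement has a typo $\CC^d$.
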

\begin{proof}
Since the unit matrix $E$ is the unit element of  $\cA$,   $\rho(E)=I$ by Definition \ref{defrep}. Hence, 
for any $p\in \CC[x_1,\dots,x_d]$ the operator $\rho(p E)$ belongs to
the commutant of $\rho(\cA)$. Therefore, since $\rho$ is irreducible,
there is a complex number $\chi(p)$ such that $\rho(pE)=\chi(p)I$. Clearly,   $p\to \rho(pE)$ is a $*$-homomorphism. Hence the map $p\to \chi(p)$ is a character on the $*$-algebra $\CC[x_1,\dots,x_d]$, so  there is a point $t_0\in \RR^d$ such that $\chi(p)=p(t_0)$ for  $p\in \CC[x_1,\dots,x_d]$.

Let $M_d(\CC)$ denote the $*$-subalgebra of  constant matrices in $\cA.$
For $(p_{jk})\in \cA$ we have $\rho((p_{jk}))=\sum_{j,k} \rho(p_{jk}E)\rho(e_{jk})$. This implies that $\rho(\cA)$ and $\rho(M_d(\CC))$ have the same commutants. Therefore, $\rho_{|M_d(\CC)}$ is an irreducible $*$-representation of the matrix algebra  $M_d(\CC)$ and hence unitarily equivalent to the identity representation. That is, there is a unitary $U$ of  $\CC^d$ onto $V$ such that $\rho(e_{jk})=Ue_{jk}U^{-1}$ for $j,k=1,\dots,d$. Then, for  $(p_{jk})\in \cA$, we derive
\begin{align*}
\rho((p_{jk}))&= \sum_{j,k=1}^d \rho(p_{jk}E)\rho(e_{jk})= \sum_{j,k=1}^d p_{jk}(t_0)Ue_{jk}U^{-1}\\&=U\bigg(\sum_{j,k=1}^d p_{jk}(t_0)e_{jk}\bigg)U^{-1}=U(p_{jk}(t_0))U^{-1}.
\end{align*}
\end{proof}

Fixe $k\in \NN$.  We denote by $\cA_k$ the span of all elements $x^\alpha e_{ij}$, where $|\alpha|\leq k$ and $i,j=1,\dots,n$. Here $x^\alpha e_{ij}$ denotes  the matrix with entry $x^\alpha$ at the $(i,j)$-place and zero otherwise.

\begin{proposition} Suppose that $m\in \NN$,  $\cB=\cA_m$, $\cC=\cA_{m+1}$, and  $L$ is a positive linear functional on $\cC^2$ which is a flat extension with respect to $\cB$. Then there exist   points $t_i\in \RR^d$ and vectors $u_i=(u_{li})\in \CC^d$, $i=1,\dots,r$, $r\in \NN$, such that 
\begin{align}\label{trunvmatrices}
L((p_{jk}))=\sum_{j,k=1}^d \sum_{i=1}^r p_{jk}(t_i)u_{ki}\overline{u_{ji}}\quad {\rm for}\quad  (p_{jk})\in \cC^2=\cA_{2m+2}.
\end{align}
\end{proposition}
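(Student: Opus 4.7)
The plan is to apply Theorem \ref{thm:main} to obtain a flat positive extension $\cL$ of $L$ on $\cA$, and then decompose the resulting finite-dimensional GNS-type representation into irreducible summands classified by Lemma \ref{irrerepmatrix}.

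First, I would verify the hypotheses of Theorem \ref{thm:main}. Taking the $*$-closed generating set $\{x_1,\dots,x_d\}\cup\{e_{jk}:1\le j,k\le n\}$, the subspaces $\cB=\cA_m$ and $\cC=\cA_{m+1}$ are $*$-invariant and connected to $1$. Since $\delta(\cA)=2$ and $\cB^{[1]}=\cA_{m+1}=\cC$, the condition $\cB^{[m']}\subseteq\cC$ with $2m'\ge\delta(\cA)$ is satisfied for $m'=1$. Theorem \ref{thm:main} then supplies a unique positive linear extension $\cL$ of $L$ on $\cA$, together with a finite-dimensional subspace $\cB'\subseteq\cB$ and a $*$-representation $\rho_\cL$ of $\cA$ on the Hilbert space $(\cB',\langle\cdot,\cdot\rangle_\cL)$ satisfying $\cL(\fa)=\langle\rho_\cL(\fa)\,1,1\rangle_\cL$ for every $\fa\in\cA$.

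Because $\cB'$ is finite dimensional and $\rho_\cL$ is a $*$-representation on a finite-dimensional Hilbert space, a standard argument decomposes it orthogonally as $\cB'=\bigoplus_{i=1}^{r}V_i$ with each restriction $\rho_i:=\rho_\cL|_{V_i}$ an irreducible $*$-subrepresentation. By Lemma \ref{irrerepmatrix}, for each $i$ there exist $t_i\in\RR^d$ and a unitary $U_i\colon\CC^d\to V_i$ such that $\rho_i((p_{jk}))=U_i\,(p_{jk}(t_i))\,U_i^{-1}$ for every matrix $(p_{jk})\in\cA$.

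Finally, writing $1=\sum_{i=1}^{r}v_i$ with $v_i\in V_i$ and setting $u_i:=U_i^{-1}v_i=(u_{1i},\dots,u_{di})\in\CC^d$, the orthogonality of the decomposition yields, for any $(p_{jk})\in\cC^2=\cA_{2m+2}$,
\[
L((p_{jk}))=\cL((p_{jk}))=\sum_{i=1}^{r}\langle(p_{jk}(t_i))\,u_i,u_i\rangle_{\CC^d}=\sum_{i=1}^{r}\sum_{j,k=1}^{d}p_{jk}(t_i)\,u_{ki}\,\overline{u_{ji}},
\]
which is the identity \eqref{trunvmatrices}. The only nonroutine ingredient is the orthogonal decomposition into irreducibles on the finite-dimensional Hilbert space $\cB'$; once this is granted, Lemma \ref{irrerepmatrix} describes each summand explicitly and the claimed formula follows by direct computation.
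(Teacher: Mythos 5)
Your proposal is correct and follows essentially the same route as the paper: invoke Theorem \ref{thm:main} (checking $\delta(\cA)=2$ and $\cB^{[1]}=\cC$) to get a flat positive extension with a finite-dimensional GNS representation, decompose it orthogonally into irreducibles, identify each summand via Lemma \ref{irrerepmatrix}, and compute the vector functional. The only difference is cosmetic: you spell out the hypothesis check and the final computation slightly more explicitly than the paper does.
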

 \begin{proof}
As $\delta (\cA)=2$ and $\cB^{[1]}=\cA_m^{[1]}=\cA_{m+1}=\cC$, by Theorem \ref{thm:main}, $L$ has a flat extension  to a positive linear functional $\tilde{L}$ on the whole algebra $\cA$.
Let $\rho_{\tilde{L}}$ denote the GNS representation of  $\tilde{L}$.  Since $\rho_{\tilde{L}}$ acts on a subspace of the finite dimensional space $\cB$,  it is an orthogonal direct sum of finite dimensional irreducible $*$-representations $\rho_i$ acting on unitary spaces $V_i$, $i=1,\dots,r$. Each representation $\rho_i$ is of the form described in Lemma \ref{irrerepmatrix}. For $(p_{jk})\in \cA$, we then obtain 
$$
\tilde{L}((p_{jk}))=\langle \rho_{\tilde{L}}((p_{jk}))v,v\rangle_V=\sum_{i=1}^r \langle \rho_i((p_{jk}))v_i,v_i\rangle_{V_i}=
\sum_{i=1}^r \sum_{j,k=1}^d p_{jk}(t_i)u_{ki}\overline{u_{ji}}
$$
which implies (\ref{trunvmatrices}), since $\tilde{L}$ is an extension of $L$.
Here $v$ is the direct sum of vectors $v_i$. Further, $u_i=U_i^{-1}v_i$ and  $t_i$ denotes the point and $U_i$ is the unitary from   Lemma \ref{irrerepmatrix} applied to the irreducible representation $\rho_i$.
\end{proof}
\subsection{Truncated moment problem for enveloping algebras}

Let $\fg$ be real finite dimensional Lie algebra. Recall that the
universal enveloping algebra $\cA \assign \cE ( \fg)$
of $\fg$ is a complex unital $\ast$-algebra with involution
determined by the requirement $(iy)^{\ast} = iy$ for $y \in \fg$. Fix
a basis $\{y_1, \ldots, y_d \}$ of the real vector space $\fg$. Then
there are real numbers $c_{jkl}$, $j, k, l = 1, \ldots, d$, the structure
constants of the Lie algebra $\fg$, such that
\[ [y_j, y_k] = \sum_l c_{jkl} y_l . \]
The $\ast$-algebra $\cE ( \fg)$ is the quotient algebra of
the free $\ast$-algebra $\cF (x_1, \ldots,, x_d)$ be the two-sided
$\ast$-ideal $\cI$ generated by the set $\cI_G$ of elements
\begin{equation}
  x_j x_k - x_k x_j - \sum_l c_{jkl} i x_l, ~ ~ l = 1, \ldots, d,
  \label{idealgen}
\end{equation}
where the quotient map $\sigma$ is given by $\sigma (x_j) = iy_j$, $j = 1,
\ldots, d$. We deduce that $\delta (\cA)\leq 2$.
Further, by the Poincare-Birkhoff-Witt theorem, the set
\[ \{y_1^{n_1} \ldots y_d^{n_d} ; (n_1, \ldots, n_d) \in N_0^d \} \]
is a vector space basis of $\cE ( \fg)$. For $m \in
\NN$, let $\cA_m$ denote the linear span of elements $y_1^{n_1}
\ldots y_d^{n_d}$, where $n_1 + \ldots + n_d \leq m$.

\begin{proposition}\label{tmpenveloping}
  Let $\cB = \cA_m$ and $\cC = \cA_{m + 1}$.
  Suppose that $L$ is a positive linear functional on $\cC^2$ which is a flat
  extension with respect to $\cB$. Then $L$ has an extension to
  a positive linear functional $\tilde{L}$ on $\cA$ and $\tilde{L}$
  is a flat extension with respect to $\cB$.
\end{proposition}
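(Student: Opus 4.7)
The plan is to apply Theorem~\ref{thm:main} directly to $\cB = \cA_m$ and $\cC = \cA_{m+1}$; positivity of the extension is then automatic from the last clause of that theorem. What I must check are the structural hypotheses. First, each $\cA_k$ is $*$-invariant: since the hermitean generators are $a_j = iy_j$, the adjoint of a PBW monomial $y_1^{n_1}\cdots y_d^{n_d}$ is, up to a scalar, the reversed product $y_d^{n_d}\cdots y_1^{n_1}$, which can be rearranged back into PBW form using the commutation relations $[y_j,y_k]=\sum_l c_{jkl} y_l$; each such commutation lowers the total degree by one, so the result remains in $\cA_k$.

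Second, the standard filtration $\cA_0 \subset \cA_1 \subset \cdots$ witnesses that both $\cB$ and $\cC$ are connected to $1$: every PBW monomial of degree $l+1$ factors as $(iy_j)\cdot w$ with $w\in \cA_l$, giving $\cA_{l+1}\subseteq \cA_l^{[1]}$. Finally, $\delta(\cA)\leq 2$ is already noted in the excerpt, because the generators of $\cI$ displayed in (\ref{idealgen}) have index $2$. Together with $\cB^{[1]} = \cA_m^{[1]} = \cA_{m+1} = \cC$, this yields the prolongation hypothesis of Theorem~\ref{thm:main} with the theorem's parameter ``$m$'' taken to be $1$, so that $2\cdot 1 \geq \delta(\cA)$.

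Theorem~\ref{thm:main} then produces a unique positive hermitean extension $\tilde L$ of $L$ on $\cA$ which is a flat extension with respect to $\cC$. Flatness with respect to the smaller subspace $\cB$ is immediate from the direct-sum decomposition $\cA = \cB' \oplus K_{\tilde L}(\cA)$ proved in Proposition~\ref{prop:sum}, since $\cB'\subseteq \cB$. I do not anticipate any substantive obstacle here; this proposition is essentially a specialization of the main theorem to enveloping algebras, and the only points requiring a touch of care are the PBW-based $*$-invariance of $\cA_k$ and the prolongation identity $\cA_m^{[1]} = \cA_{m+1}$.
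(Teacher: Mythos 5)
Your proposal is correct and follows exactly the paper's route: verify $\delta(\cA)\leq 2$ from the quadratic generators (\ref{idealgen}) and the prolongation identity $\cA_m^{[1]}=\cA_{m+1}$, then invoke Theorem~\ref{thm:main}. You additionally spell out the PBW-based $*$-invariance, the connectedness to $1$, and the passage from flatness with respect to $\cC$ to flatness with respect to $\cB$ via Proposition~\ref{prop:sum}, details the paper leaves implicit.
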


\begin{proof}
Since the ideal
  $\cI$ is generated the quadratic elements in (\ref{idealgen}),
  $\delta (\cA)\leq 2$ and $\cB^{[1]}=\cA_m^{[1]}=\cA_{m+1}=\cC$, the
  assumptions of Theorem  \ref{thm:main} are fulfilled which gives the assertion.
\end{proof}

Let $\cR_{\rm fin} ( \fg)$ denote the family of all
$\ast$-representations of the $\ast$-algebra $\cE ( \fg)$
acting on \tmtextit{finite dimensional} Hilbert spaces. By Theorem  \ref{thm:main}, the GNS representation $\rho_{\tilde{L}}$ associated with the positive functional $\tilde{L}$ on $\cE ( \fg)$ acts on a subspace of the finite dimensional space $\cB$. Hence $\rho_{\tilde{L}}$ belongs to $\cR_{\rm fin} ( \fg)$ and there is a vector $v$ in the representation space of $\rho_{\tilde{L}}$ such that 
\begin{align}\label{envetmp}
L (a) = \langle \rho_{\tilde{L}} (a) v, v \rangle\quad {\rm for}\quad 
a\in \cC^2=\cA_{2m+2}.
\end{align}
%This equation can be considered as a solution of the truncated non-commutative moment problem for the enveloping algebra $\cE ( \fg)$. 
Let $G$ be the simply connected Lie group which has the Lie algebra $\fg$. Then the $*$-representation $\rho_{\tilde{L}}$ of $\cE ( \fg)$ exponentiates to a unitary representation $U$ of $G$. That is, we have $\rho_{\tilde{L}}=dU$ and
equation (\ref{envetmp}) can be considered as a solution of a truncated non-commutative moment problem for the enveloping algebra $\cE ( \fg)$, see e.g. \cite{Sch1991}. Note that it may happen that there is no non-trivial finite dimensional $*$-representation of $\cE ( \fg)$. However, if the Lie group $G$ is {\it compact}, all irreducible unitary representations of $G$ are finite dimensional, so there is a rich theory of truncated non-commutative moment problems for  $\cE ( \fg)$.

\medskip
\noindent{}Bernard Mourrain\\
Inria  Sophia Antipolis M\'editerran\'ee, BP 93, 06902 Sophia Antipolis, France.\\
E-mail address: \texttt{Bernard.Mourrain@inria.fr}

\medskip
\noindent{}Konrad Schm\"udgen\\
Universit\"at Leipzig, Mathematisches Institut, Augustusplatz 10/11, D-04109 Leipzig, Germany\\
E-mail address: \texttt{schmuedgen@math.uni-leipzig.de}

\end{document}